\documentclass[11pt, handout]{article}
\usepackage[top=2cm, bottom=2cm, left=2cm, right=2cm]{geometry}            
\usepackage{tikz} 
\usetikzlibrary{positioning}
\usepackage{graphicx}
\usepackage{amssymb}
\usepackage{epstopdf}
\usepackage{subfigure}  
\usepackage[sans]{dsfont}
\usepackage[english]{babel}
\usepackage{latexsym}
\usepackage{mathrsfs}
\usepackage{graphicx}
\usepackage{color}
\usepackage{float}
\usepackage{mathtools}
\usepackage{amsmath}
\usepackage{amsfonts}
\numberwithin{equation}{section}
\usepackage{enumerate}
\usepackage{amsthm}
\usepackage[title]{appendix}

\usepackage[bookmarks=true,colorlinks=true,linkcolor={blue},urlcolor={blue}, citecolor={blue},pdfstartview={XYZ null null 1.22}]{hyperref}%

\newtheorem{lemma}{Lemma}

\title{A structure-preserving semi-implicit finite volume scheme on vertex-staggered unstructured meshes} 

\author{Elena Bernardelli\thanks{Department of Computer Science, University of Verona, Strada le Grazie 15, Verona, 37134, Italy 
		(elena.bernardelli@univr.it)}
	\and Elena Gaburro\thanks{Department of Computer Science, University of Verona, Strada le Grazie 15, Verona, 37134, Italy 
		(elena.gaburro@univr.it)}
	\and Michael Dumbser \thanks{Laboratory of Applied Mathematics, University of Trento, Via Mesiano 77, 38123 Trento, Italy (michael.dumbser@unitn.it)}}

\begin{document}

\maketitle

\begin{abstract} 
We present a novel structure-preserving semi-implicit finite volume method on vertex-based staggered meshes for the compatible discretization of first order systems of time-dependent partial differential equations (PDEs). The method preserves divergence-free and curl-free vector fields exactly thanks to the  \textit{compatible vertex-staggered} discretization of the state variables on unstructured grids that are constituted by primal Delaunay triangles and their dual polygons. For the weakly compressible Euler equations, the proposed semi-implicit scheme is also asymptotic preserving in the low Mach number limit, i.e. it provides a compatible discretization of the incompressible Euler equations when the Mach number tends to zero. 
The new scheme applies to a broad spectrum of governing partial differential equations, including the weakly compressible and incompressible Euler and Navier-Stokes equations, the incompressible magnetohydrodynamics (MHD) system, and the incompressible version of the first-order hyperbolic Godunov-Peshkov-Romenski (GPR) model for continuum mechanics. 

The computational domain is covered by a primal triangular mesh and a dual tessellation made of so-called \textit{star} polygons constructed by linking the barycenters of adjacent triangles and the midpoints of the shared edges.
The scalar pressure field, the density, and the viscous stress tensor are defined at the triangles nodes. In particular, the nodal pressure is evolved implicitly in a continuous finite element-type fashion, yielding a symmetric and positive definite pressure system. Instead, the velocity, the momentum, the magnetic field and the distortion field are stored at the barycenters of the triangles and are evolved explicitly with a compatible finite volume approach. 

Thanks to the semi-implicit discretization, the CFL condition does not depend on the sound speed, thus enabling simulations also at \textit{low} Mach numbers. 
The fully compatible nature of our semi-implicit vertex-staggered finite volume method ensures an exactly   \textit{divergence-free} velocity field in the incompressible limit, an exactly divergence-free magnetic field for the MHD equations and an exactly \textit{curl-free} inverse deformation gradient for solid mechanics.  

The method is validated through classical benchmark test cases for the weakly compressible and incompressible Euler equations, the incompressible Navier-Stokes equations, the incompressible MHD equations and the incompressible GPR model. 
\end{abstract}

\textbf{Keywords:}
unstructured mesh, vertex-staggered finite volume scheme, structure-preserving, asymptotic preserving, semi-implicit, fully compatible discretization, divergence-free, curl-free

\section{Introduction}
%


Fluids, solids and plasmas are involved in numerous natural phenomena as well as industrial and biological processes including, e.g., geophysical applications, as weather forecasting, pollution modeling and seismic wave propagation, blood flow in the human cardiovascular system, clean energy production via fusion or aerodynamics design of vehicles, aircraft and spacecraft. As a consequence, understanding their behavior is of great importance for the development of modern society. From the mathematical point of view, fluids are modeled with the Euler equations, which are directly derived from the fundamental principles of mass and momentum conservation. Then, under appropriate assumption for the viscous stress tensor, the Navier-Stokes equation are derived, which constitute the most extended model for the simulation of viscous flows. 
Additionally, the magnetohydrodynamics (MHD) system generalizes the fluid dynamics framework to account for the interactions between electrically conducting fluids and magnetic fields occurring in plasma flows. This system couples the Euler equations with the Maxwell equations to describe the evolution of the magnetic field and plays a crucial role in astrophysics, plasma physics, and fusion energy research. 
On the other hand, the Godunov-Peshkov-Romenski (GPR) model \cite{PeshRom2014,HPRmodel,HPRmodelMHD} provides a unified first-order hyperbolic formulation for continuum mechanics, encompassing both fluid and solid-like behavior of the continuum. Unlike classical models, the GPR model introduces additional evolution equations for the distortion field, allowing the system to capture elastodynamics and plasticity within a unified framework. This makes it particularly well-suited for simulating complex multi-material flows, wave propagation in heterogeneous media, and high-speed impact problems, where traditional fluid models fail to accurately describe material deformation and shear wave interactions. 

The behavior of a fluid is generally classified based on the dimensionless \textit{Mach number}, defined as $M = \|\mathbf{u}\|/ c_0$, where $\mathbf{u}$ is the flow velocity and $c_0$ is the speed of sound in the medium. In the limit $M \rightarrow 0$, the fluid behaves as an incompressible medium, satisfying the well-known divergence-free condition $\nabla \cdot \mathbf{u} = 0$. This asymptotic behavior was rigorously studied by Klainermann and Majda in~\cite{klainerman1981singular, klainerman1982compressible}.
Traditionally, numerical methods for fluid flows have been divided into two big families: \textit{pressure-based} and \textit{density-based} solvers. The difference between these two kinds of numerical methods was initially motivated by the compressibility properties of the flow.
Typically, the \textit{incompressible} Euler and Navier-Stokes equations are solved via \textit{semi-implicit pressure-based} schemes of finite difference type on staggered grids, see e.g.~\cite{harlow1965numerical, casulli1984pressure,chorin1997numerical, chorin1968numerical, patankar1983calculation, patankar2018numerical, van1986second, bell1989second, hirt1981volume} or at the aid of \textit{continuous finite elements}~\cite{taylor1973numerical, brooks1982streamline, hughes1986new, fortin1981old, verfurth1986finite, heywood1982finite, heywood1988finite}. 
Conversely, \textit{compressible} flows at higher Mach numbers are more effectively handled by \textit{explicit density-based} finite volume schemes of the Godunov type~\cite{lax2005systems, godunov1959finite, roe1981approximate, osher1982upwind, harten1983upstream, einfeldt1991godunov, munz1994godunov, toro1994restoration, leveque2002finite, toro2013riemann}.
A first attempt to generalize \textit{pressure-based} schemes to the more general case of compressible flows was made by Casulli and Greespan~\cite{casulli1984pressure}, but the proposed method was not conservative. Semi-implicit schemes that explicitly make use of the low-Mach-number asymptotic limit of the governing partial differential equations can be found in~\cite{meister1999asymptotic, munz2003extension, klein1995semi, klein2001asymptotic}, while the first conservative staggered semi-implicit pressure-based scheme for compressible flows was introduced by Park and Munz in~\cite{park2005multiple}. The scheme~\cite{park2005multiple} can be considered as one of the first \textit{all Mach numbers} flow solvers ever proposed in the literature. The particular splitting of the explicit convective terms and implicit pressure term used in~\cite{park2005multiple} was later studied in more details in~\cite{toro2012flux} in order to construct a novel flux-vector splitting method, which was the base in~\cite{dumbser2016conservative,zampa2025asymptotic,dumbser2019divergence}.

For the numerical solution of the MHD equations, \textit{exactly} divergence-free schemes usually employ \textit{staggered meshes} \cite{yee1966numerical,BalsaraSpicer1999,Balsara2004,balsarahlle2d,MUSIC1}, where the location of the discrete electric and magnetic field is chosen in a suitable manner in combination with compatible discrete differential operators that guarantee that the discrete magnetic field remains exactly divergence-free if it was initially divergence-free. 

But staggered grids are not only used in the context of structure-preserving schemes. They are also widely used in the context of semi-implicit schemes.
The need of structure preserving schemes for hyperbolic conservation laws is very well known in the context of numerical methods for the solution of hyperbolic partial differential equations. 
Common to almost all the exactly structure-preserving schemes is the fact that they require the use of a staggered grid in order to provide natural and compatible definitions of the discrete curl, gradient and divergence operators~\cite{abgrall2023simple, abgrall2025simple, boscheri2024structure, boscheri2025structure, rio2025exactly} which should respect the two basis vector calculus identities exactly also at the discrete level, namely that \textit{curl of gradient} and \textit{divergence of curl} are identically \textit{zero}~\cite{abgrall2025simple, boscheri2024new, peshkov2021simulation}. The two basis vector calculus identities
\begin{equation}
	\nabla \times \nabla \phi = 0, \hspace{1cm} \nabla \cdot \nabla \times \mathbf{J} = 0,
	\label{basis vector calculus identities}
\end{equation}
with $\phi$ a scalar field and $\mathbf{J}$ a vector field are an immediate consequence of the Schwarz theorem on the symmetry of second partial derivatives
\begin{equation}
	\partial_j\partial_k \phi = \partial_k\partial_j\phi, \hspace{1cm} \partial _j\partial_kJ_m = \partial_k \partial_jJ_m.
	\label{partial derivatives symmetry}
\end{equation}
This can be easily seen in Einstein index notation, assuming summation over two repeated indices, and using a classical fully anti symmetric Levi-Civita tensor $\epsilon_{ijk}$ for which it is well known that its contraction with a symmetric tensor is zero ($\epsilon_{ijk}T_{jk} = 0$ for $T_{jk} = T_{kj}$ since $\epsilon_{ijk} = - \epsilon_{ijk}$):
\begin{equation}
	\nabla \times \nabla \phi = \epsilon_{ijk}\partial _j\partial_k\phi = 0, \hspace{1cm} \nabla \cdot \nabla \times \mathbf{J} = \partial_i(\epsilon_{ijk}\partial_jJ_k) = \epsilon_{ijk}\partial_i\partial_jJ_k = 0,
\end{equation}
due to the symmetry of second partial derivatives \eqref{partial derivatives symmetry} and because the contraction of the antisymmetric Levi-Civita tensor with a symmetric tensor is zero.

Without pretending completeness, we refer the reader to the well-known mimetic finite differences~\cite{hyman1997natural, margolin2000discrete, lipnikov2014mimetic}, compatible finite volume schemes~\cite{park2005multiple, balsara2018computational, hazra2019globally} and compatible finite element~\cite{nedelec1980mixed, zhang2005new, hiptmair2002finite, arnold2006finite, monk2003finite, rodriguez2015finite, campos2016gauss, di2023arbitrary, bruno2022unisolvent,zampa2023using} schemes and references therein.
Moreover, compatible schemes that preserve the curl-free property of a vector field exactly on the discrete level are still quite rare, but important steps forward in this direction have been made in~\cite{jeltsch2006curl,balsara2023curl,boscheri2021structure,chiocchetti2023exactly, jung2024curl, dhaouadi2023structure,Barsukow2024}. Concerning compatible semi-implicit schemes, we mention \cite{dumbser2019divergence,boscheri2024structure,fambri2021novel} and \cite{boscheri2021structure,FourSplit}.  

The aim of this paper is to present a novel fully compatible, asymptotic preserving semi-implicit vertex-staggered finite volume method for the numerical solution of the Euler and Navier-Stokes equations, the incompressible MHD equations and the incompressible GPR model. The proposed scheme is both structure-preserving (SP), and asymptotic-preserving (AP), making it also suitable for low Mach numbers flows in the weakly compressible regime. Explicit density-based solvers suffer from inefficiencies and inaccuracies in the low Mach number regime, necessitating implicit time discretization. However, fully implicit schemes in general lead to highly nonlinear, non-symmetric algebraic systems with a large number of unknowns (density, velocity, pressure, distortion field, and magnetic field). Our semi-implicit approach circumvents these issues by employing: (i) an implicit discretization only for the pressure term and (ii) an explicit discretization for all other terms, especially of the non-linear convective terms. 
For weakly compressible flows, our discretization results in a mildly non-linear, symmetric positive definite system for the pressure as the only unknown. Moreover, the time step restriction of our method is dictated by the flow velocity rather than the speed of sound, ensuring accuracy and robustness also in the incompressible limit when the Mach number tends to zero. 

The rest of the paper is organized as follows: in Section~\ref{sec.equation} we present the different systems of partial differential equations treated in this paper, in Section~\ref{sec.method} we introduce the domain discretization and the definition of our compatible discrete differential operators together with their properties. After that, we derive the discretization of the governing equations via the new structure-preserving semi-implicit finite volume scheme. In Section ~\ref{sec.results}, an extensive validation through classical benchmark test cases is shown. The conclusions and an outlook to further works are given in Section~\ref{conclusion}.

\section{Governing PDE systems}

\label{sec.equation}

As stated before, the new method introduced here applies to a rather broad spectrum of PDEs. In the following, we briefly introduce the governing equations treated in this paper.

%


\subsection{Incompressible Euler equations}
Let $\rho=const$ be a constant fluid density, $\mathbf{u} = (u, v)$ the velocity vector and $p$ the fluid pressure. The incompressible Euler equations read 
\begin{equation}
	\begin{aligned}
		\nabla \cdot \mathbf{u} & = 0, \\
		\frac{\partial (\rho \mathbf{u})}{\partial t} + \nabla \cdot (\rho \mathbf{u} \otimes \mathbf{u}) + \nabla p &= 0,
	\end{aligned}
\end{equation}
and are of mixed hyperbolic-elliptic type due to the divergence-free condition of the velocity field. 

\subsection{Weakly compressible Euler and Navier-Stokes equations}
Let $\rho$ denote the density and $\mathbf{u} = (u, v)$ the velocity vector of the fluid. The weakly compressible Euler and Navier-Stokes equations expressing the conservation of mass and momentum are given in  conservation form by 
\begin{equation}
	\begin{aligned}
		\frac{\partial \rho}{\partial t} + \nabla \cdot (\rho \mathbf{u}) &= 0, \\
		\frac{\partial (\rho \mathbf{u})}{\partial t} + \nabla \cdot (\rho \mathbf{u} \otimes \mathbf{u}) + \nabla p + \nabla \cdot \boldsymbol{\sigma} &= 0,
	\end{aligned}
\end{equation}
where $p$ represents the pressure, 
$\boldsymbol
{\sigma} = -\mu (\nabla \mathbf{u} + \nabla \mathbf{u}^T - \frac{2}{3} (\nabla \cdot \mathbf{u}) \mathbf{I})$ is the viscous stress tensor with dynamic viscosity $\mu$ and $\mathbf{I}$ is the identity matrix. 
Throughout this paper we assume the following relation between density and pressure:
\begin{equation}
	p = \rho c_0^2, 
\end{equation}
where $c_0 = const$ is the isothermal sound speed. We furthermore introduce the Mach number
\begin{equation}
	M = \frac{\left\| \mathbf{u} \right\|}{c}.
\end{equation}
In the limit of \textit{incompressible} flows when $M \to 0$, see \cite{klainerman1981singular,klainerman1982compressible,meister1999asymptotic,munz2003extension}, the velocity field becomes divergence-free, i.e. $\nabla \cdot \mathbf{u} \to 0$, and the stress tensor can be simplified to $\boldsymbol {\sigma} = -\mu \nabla \mathbf{u}$. The divergence-free property of the velocity in the limit $M \to 0$ will play a key role in validating our numerical results.  In the particular case when the dynamic viscosity $\mu$ is equal to zero, we retrieve the \textit{isothermal} Euler equations.   

\subsection{Incompressible MHD equations}
The incompressible magnetohydrodynamics (MHD) equations describe the dynamics of electrically conducting fluids in the presence of magnetic fields. The governing equations read 
\begin{equation}
	\begin{aligned}
		\frac{\partial \rho\mathbf{u}}{\partial t}
		+ \nabla \cdot \left(
		\rho \mathbf{u} \otimes \mathbf{u}		
		- \mathbf{B} \otimes \mathbf{B}
		+ \frac{1}{2} \mathbf{B}^2 \mathbf{I} 
		\right)
		+ \nabla p &= 0, \\
		\frac{\partial \mathbf{B}}{\partial t} + \nabla \times  \mathbf{E}
		&= 0,\\
		\nabla \cdot \mathbf{u} &= 0.
	\end{aligned}
\end{equation}
Here the density $\rho$ is assumed to be constant, $\mathbf{B}$ is the magnetic field obtained from a vector potential $\mathbf{A}$ as $\mathbf{B} = \nabla \times \mathbf{A}$ and $\mathbf{E}$ is the electric field given by $\mathbf{E} = - \mathbf{u} \times \mathbf{B}$. 
In the case of the MHD system, the magnetic field must remain divergence-free, as no magnetic monopoles can exist, i.e. we have the following involution, 
$$\nabla \cdot \mathbf{B} = 0,$$
which is automatically satisfied for all times if it is satisfied initially. 

\subsection{Incompressible GPR system}
The Godunov–Peshkov–Romenski (GPR) model \cite{PeshRom2014} of continuum mechanics in the incompressible limit reads 
\begin{equation}
	\begin{aligned}
		\frac{\partial \rho \mathbf{u}}{\partial t} + \nabla \cdot (\rho \mathbf{u} \otimes \mathbf{u}  ) + \nabla p + \nabla \cdot \boldsymbol{\sigma} &= 0, \\
		\frac{\partial \mathbf{A}}{\partial t} + \nabla (\mathbf{A}\mathbf{u}) + \mathbf{u} \cdot \nabla \times \mathbf{A} &= 0,\\
		\nabla \cdot \mathbf{u} &= 0,
	\end{aligned}
	\label{equation GPR}
\end{equation}
where $\mathbf{A}$ is the distortion field and $\boldsymbol{\sigma}$ is the shear stress tensor. 
For elastic solids the distortion field $\mathbf{A}=A_{ik} = \partial X_i / \partial x_k$ represents the inverse deformation gradient, with $X_i$ being the Lagrangian coordinates in the reference configuration and $x_k$ being the Eulerian coordinates. Since $\mathbf{A}$ is a gradient, it must obviously be curl-free, hence the following condition (involution) on $\mathbf{A}$ holds for all times: 
\begin{equation*}
	\partial_m A_{ik} - \partial_{k} A_{im} = 0, \qquad \textnormal{or, equivalently}, \qquad 
	\nabla \times \mathbf{A}=0. 
\end{equation*}
This expression represents the curl-free condition for the rows of the distortion field~$\mathbf{A}$, which we will use at the discrete level to validate our numerical results.

From $\mathbf{A}$ the stress tensor can be computed as  
\begin{equation}
	\boldsymbol{\sigma} = \rho c_s^2 \mathbf{A}^T\mathbf{A} \, \operatorname{dev}(\mathbf{G}),
\end{equation}
with $c_s$ the shear sound speed, $\mathbf{G} = \mathbf{A}^T\mathbf{A}$ being the Riemannian metric tensor induced by the coordinate transformation from Lagrangian to Eulerian coordinates and  $\operatorname{dev}(\mathbf{G}) = \mathbf{G} - \frac{1}{3} \operatorname{tr}(\mathbf{G}) \mathbf{I}$ its trace-free part. In index notation we write 
the term in the second equation of \eqref{equation GPR} $\mathbf{u} \cdot \nabla \times \mathbf{A} = u_m \cdot (\partial_m A_{ik} - \partial_k A_{im}) $ more precisely. It is formally zero, since $\mathbf{A}$ is curl-free, but it is necessary to ensure Galilean invariance and thermodynamic compatibility of the system, see also \cite{God1972MHD,GodunovRomenski72,Rom1998}.

\section{Numerical method}
\label{sec.method}

In this section, we present how the computational domain is discretized at the aid of staggered unstructured meshes and we also introduce the definition of our compatible discrete differential operators, together with their mathematical properties. Moreover, we explain how to discretize the aforementioned governing equations both in space and time. 
We then present the main properties of the obtained structure-preserving numerical schemes. 
These include the full compatibility with the \textit{low} Mach number limit, the exact 
preservation of the divergence-free property of the magnetic field and of the curl-free 
property of the inverse deformation gradient, and the divergence-free property of the 
velocity for the incompressible Euler and Navier-Stokes equations. Furthermore, the 
asymptotic-preserving property of the scheme is shown for the weakly compressible 
Euler equations as $M \to 0$.

\subsection{Staggered mesh, definition of the compatible discrete differential operators and main properties of the scheme}

We discretize our domain $\Omega$ with boundary $\partial \Omega$ in $d = 2$ space dimensions with a set of non-overlapping triangular control volumes $\omega_c$ of the Delaunay type, which will represent our \textit{primal} mesh. 
Here, each triangle $\omega_c$ is identified by the index \textit{c} and its barycenter has coordinates $\mathbf{x}_{c}$. 
Next, we denote with 
\textit{p} a generic vertex of the triangulation with coordinates $\mathbf{x}_p$.
In particular, the set of vertexes belonging to cell $\omega_c$ is called $\mathcal{P}(c)$, while the set of triangles sharing the node \textit{p} is indicated with $\mathcal{C}(p)$. The notation and the definition of the discrete differential operators follows \cite{Maire2007,Maire2020}.  

In order to create the \textit{dual} mesh, we first need to define the sub-cell $\omega_{pc}$ by connecting the vertex with position $\mathbf{x}_p$, the cell barycenter of coordinates $\mathbf{x}_{c} = 1/(d+1)\sum_{p \in \mathcal{P}(c)} \mathbf{x}_p$ and the left and right midpoints of the edges impinging on node \textit{p}, namely $p^{-1/2}$ and $p^{+1/2}$ with position $\mathbf{x}_{p^{-1/2}}$ and $\mathbf{x}_{p^{+1/2}}$, respectively.
The half lengths of these edges are given by $l_{pc}^- = | \mathbf{x}_p - \mathbf{x}_{p^{-1/2}}|$ and $l_{pc}^+ = | \mathbf{x}_p - \mathbf{x}_{p^{+1/2}}|$ and the corner normal vector is computed as
\begin{equation}
	l_{pc}\mathbf{n}_{pc} = l_{pc}^+\mathbf{n}_{pc}^+ + l_{pc}^-\mathbf{n}_{pc}^-.
\end{equation}

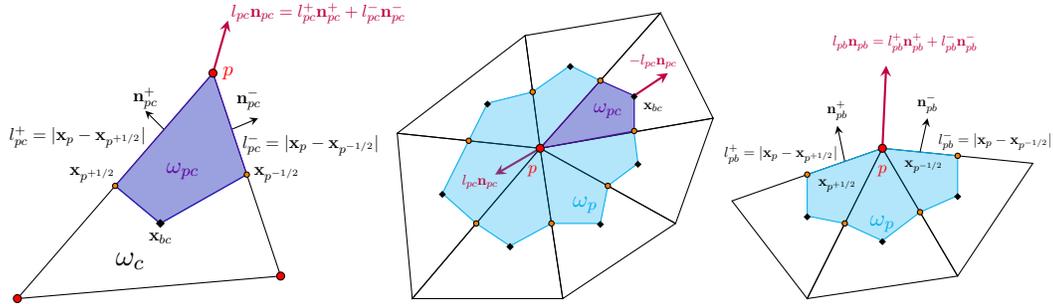
\begin{figure}[!bp]
	\centering
	\begin{tikzpicture}
		\hspace{-1.1cm}
		\draw[->, thick, purple, >=stealth] (2.6,3) -- (2.8,3.7);
		\node[purple, scale = 0.6] at (4, 3.8) {$l_{pc}\mathbf{n}_{pc} = l_{pc}^+\mathbf{n}_{pc}^+ + l_{pc}^-\mathbf{n}_{pc}^-$};
		\draw[->, thin, black, >=stealth] (1.95,2.25) -- (1.7,2.5);
		\draw[->, thin, black, >=stealth] (2.845,2.25) -- (3.2,2.4);
		
		\draw[black, thin] (0,0) -- (3.5,0.3) -- (2.6,3) -- cycle;
		\draw[cyan, thin] (1.3,1.5) -- (1.9,1) --(3.05,1.65);
		\draw[cyan, fill=cyan, fill opacity = 0.3] (1.3,1.5) -- (1.9,1) -- (3.05,1.65) -- (2.6,3) -- cycle;
		\draw[cyan, thin] (1.3,1.5) -- (1.9,1) --(3.05,1.65);
		\draw[purple!50!blue, fill=purple!50!blue, fill opacity = 0.3] (1.3,1.5) -- (1.9,1) -- (3.05,1.65) -- (2.6,3) -- cycle;
		\filldraw[fill=red, draw=black] (0,0) circle(1.5pt);
		\filldraw[fill=red, draw=black] (3.5,0.3) circle(1.5pt);
		\filldraw[fill=red, draw=black] (2.6,3) circle(1.5pt);
		\filldraw[fill=orange, draw=black] (1.3, 1.5) circle (1pt);
		\filldraw[fill=orange, draw=black] (3.05,1.65) 	circle (1pt);
		\begin{scope}[shift={(1.9,1)}, rotate=45]
			\filldraw[black] (-1pt,-1pt) rectangle (1pt,1pt); 
		\end{scope}

		\node[purple!50!blue, scale = 0.8] at (2.2, 1.7) {$\mathbf{\omega}_{pc}$};
		\node[red, scale = 0.7] at (2.8,3) {$p$};
		\node[black] at (1.5, 0.5) {$\mathbf{\omega}_c$};
		\node[black, scale = 0.6] at (1.9,0.8) {$\mathbf{x}_{bc}$};
		\node[black, scale = 0.6] at (3.45, 1.65) {$\mathbf{x}_{p ^{-1/2}}$};
		\node[black, scale = 0.6] at (1.0, 1.65) {$\mathbf{x}_{p ^{+1/2}}$};
		\node[black, scale = 0.6] at (1.7, 2.67)	{$\mathbf{n}_{pc}^+$};
		\node[black, scale = 0.6] at (3.075, 2.65)	{$\mathbf{n}_{pc}^-$};
		\node[black, scale = 0.6] at (3.9, 2.05) {$l_{pc}^- = | \mathbf{x}_p - \mathbf{x}_{p^{-1/2}}|$};
		\node[black, scale = 0.6] at (0.8,2.15) {$l_{pc}^+ = | \mathbf{x}_p - \mathbf{x}_{p^{+1/2}}|$};
		
		\hspace{1.25cm}
		
		\draw[->, thick, purple, >=stealth] (6.95,2.7) -- (7.4,3);
		\draw[->, thick, purple, >=stealth] (5.7,2) -- (5.1, 1.65);
		
		\draw[black, thin] (4,0) -- (6,0) -- (5.7,2) -- cycle;
		\draw[black, thin] (6,0) -- (7.5,1) -- (5.7,2) -- cycle;
		\draw[black, thin] (7.5,1) -- (8,2.4) -- (5.7,2) -- cycle;
		\draw[black, thin] (8,2.4) -- (7.3,3.8) -- (5.7,2) -- cycle;
		\draw[black, thin] (7.3,3.8) -- (5.5,3.5) -- (5.7,2) -- cycle;
		\draw[black, thin] (5.5,3.5) -- (3.8,2.2) -- (5.7,2) -- cycle;
		\draw[black, thin] (3.8,2.2) -- (4,0) -- (5.7,2) -- cycle;
		\draw[cyan, fill = cyan, fill opacity = 0.3] (5.3,0.7) -- (5.85,1) -- (6.5,1) -- (6.6,1.5) -- (7,1.8) -- (6.95,2.2) -- (6.95,2.7) -- (6.5,2.9) -- (6.1,3.1) -- (5.6,2.75) -- (5,2.6) -- (4.75,2.1) -- (4.4,1.4) -- (4.85,1) -- cycle;
		\draw[purple!50!blue, fill=purple!50!blue, fill opacity=0.30] (6.95,2.23) -- (6.95,2.7) -- (6.5,2.9) -- (5.7,2) -- cycle;
		\begin{scope}[shift={(5.3,0.7)}, rotate=45]
			\filldraw[black] (-1pt,-1pt) rectangle (0.6pt,0.6pt); 
		\end{scope}
		\filldraw[fill=orange, draw=black] (5.85,1) circle (1pt);
		\begin{scope}[shift={(6.5,1)}, rotate=45]
			\filldraw[black] (-1pt,-1pt) rectangle (0.5pt,0.5pt); 
		\end{scope}
		\filldraw[fill=orange, draw=black] (6.6,1.5) circle (1pt);
		\begin{scope}[shift={(7,1.8)}, rotate=45]
			\filldraw[black] (-1pt,-1pt) rectangle (0.5pt,0.5pt); 
		\end{scope}
		\filldraw[fill=orange, draw=black] (6.95,2.23) circle (1pt);
		\begin{scope}[shift={(6.95,2.7)}, rotate=45]
			\filldraw[black] (-1pt,-1pt) rectangle (0.5pt,0.5pt); 
		\end{scope}
		\filldraw[fill=orange, draw=black] (6.5,2.9) circle (1pt);
		\begin{scope}[shift={(6.1,3.1)}, rotate=45]
			\filldraw[black] (-1pt,-1pt) rectangle (0.5pt,0.5pt); 
		\end{scope}
		\filldraw[fill=orange, draw=black] (5.6,2.75) circle (1pt);
		\begin{scope}[shift={(5,2.6)}, rotate=45]
			\filldraw[black] (-1pt,-1pt) rectangle (0.5pt,0.5pt); 
		\end{scope}
		\filldraw[fill=orange, draw=black] (4.75,2.1) circle (1pt);
		\begin{scope}[shift={(4.4,1.4)}, rotate=45]
			\filldraw[black] (-1pt,-1pt) rectangle (0.5pt,0.5pt); 
		\end{scope}
		\filldraw[fill=orange, draw=black] (4.85,1) circle (1pt);
		
		\filldraw[fill=red, draw=black] (5.7,2) circle (1.5pt);
	
		\node[cyan, scale = 0.8] at (6.3,1.25) {$\mathbf{\omega}_p$};
		\node[red, scale = 0.6] at (5.6, 1.7) {$p$};
		\node[purple!50!blue, scale = 0.7] at (6.6,2.5) {$\mathbf{\omega}_{pc}$};
		\node[black, scale = 0.5] at (7.2,2.55) {$\mathbf{x}_{bc}$};
		\node[purple, scale = 0.5] at (7.20,3.15) {$-l_{pc}\mathbf{n}_{pc}$};
		\node[purple, scale = 0.5] at (4.9,1.55) {$l_{pc}\mathbf{n}_{pc}$};
		
		\hspace{1.25cm}
		
		\draw[->, thick, purple, >=stealth] (9,2) -- (9.05, 3.1);
		\node[purple, scale = 0.5] at (9.3, 3.4) {$l_{pb}\mathbf{n}_{pb} = l_{pb}^+\mathbf{n}_{pb}^+ + l_{pb}^-\mathbf{n}_{pb}^-$};
		\draw[->, thin, black, >=stealth] (8.5,1.82) -- (8.4,2.3);
		\draw[->, thin, black, >=stealth] (9.5,1.95) -- (9.6, 2.4);
		
		\draw[black, thin] (8,0) -- (9,2) -- (7,1.3) -- cycle;
		\draw[black, thin] (8,0) -- (10,0.3) -- (9,2) -- cycle;
		\draw[black, thin] (10,0.3) --(11,1.8) -- (9,2) -- cycle;
		
		\draw[cyan, fill = cyan, fill opacity = 0.3] (8,1.65) -- (8,1.1) -- (8.5,1) -- (9, 0.77) -- (9.5,1.15) -- (10, 1.37) -- (10,1.9) -- (9,2)  --cycle;
		
		\filldraw[fill=orange, draw=black] (8,1.65) circle (1pt);
		\begin{scope}[shift={(8,1.1)}, rotate=45]
			\filldraw[black] (-1pt,-1pt) rectangle (0.5pt,0.5pt); 
		\end{scope}
		\filldraw[fill=orange, draw=black] (8.5,1) circle (1pt);
		\begin{scope}[shift={(9, 0.77)}, rotate=45]
			\filldraw[black] (-1pt,-1pt) rectangle (0.5pt,0.5pt); 
		\end{scope}
		\filldraw[fill=orange, draw=black] (9.5,1.15) circle (1pt);
		\begin{scope}[shift={(10, 1.37)}, rotate=45]
			\filldraw[black] (-1pt,-1pt) rectangle (0.5pt,0.5pt); 
		\end{scope}
		\filldraw[fill=orange, draw=black] (10,1.9) circle (1pt);
		
		\filldraw[fill=red, draw=black] (9,2) circle (1.5pt);
		
		\node[red, scale = 0.6] at (9,1.7) {$p$};
		\node[cyan, scale = 0.8] at (9, 1) {$\mathbf{\omega}_p$};
		\node[black, scale = 0.5] at (9.55,1.75) {$\mathbf{x}_{p ^{-1/2}}$};
		\node[black, scale = 0.5] at (8.4,1.5) {$\mathbf{x}_{p ^{+1/2}}$};
		\node[black, scale = 0.5] at (8.4, 2.5)	{$\mathbf{n}_{pb}^+$};
		\node[black, scale = 0.5] at (9.6, 2.6)	{$\mathbf{n}_{pb}^-$};
		\node[black, scale = 0.5] at (10.5,2.1) {$l_{pb}^- = | \mathbf{x}_p - \mathbf{x}_{p^{-1/2}}|$};
		\node[black, scale = 0.5] at (7.65,1.92) {$l_{pb}^+ = | \mathbf{x}_p - \mathbf{x}_{p^{+1/2}}|$};

	\end{tikzpicture}
	\caption{On the left, a control volume $\omega_c$, with its barycenter and edge midpoints. In violet, the sub-cell $\omega_{pc}$ which results by connecting the vertex with position $\mathbf{x}_p$, the cell barycenter of coordinates $\mathbf{x}_{bc} = 1/(d+1)\sum_{p \in \mathcal{P}(c)} \mathbf{x}_p$ and the left and right midpoints of the edges impinging on node \textit{p}, namely $p^{-1/2}$ and $p^{+1/2}$. In the middle, the dual cell $\omega_p$ (cyan) is obtained by the union of all sub-cells $\omega_c$ sharing a generic mesh node $p$. On the right, the notation for the boundary dual cell located on $\partial \Omega$.}
	\label{domain discretization}
\end{figure}
By construction, the corner vectors satisfy the fundamental geometrical identity
\begin{equation}
	\sum_{p \in \mathcal{P}(c)}l_{pc}\mathbf{n}_{pc} = 0,
\end{equation}
which is a consequence of the Gauss theorem.

Now, we introduce the dual cell $\omega_p$ given by the union of all sub-cells $\omega_{pc}$ sharing the generic mesh node \textit{p}:
\begin{equation}
	\omega_p = \bigcup_{c \in \mathcal{C}(p)} \omega_{pc}.
\end{equation}
The outward pointing corner vectors of $\omega_p$ are simply the cell corner vectors $l_{pc}\mathbf{n}_{pc}$ with opposite sign for all $c \in \mathcal{C}(p)$ and thus the following identity holds:
\begin{equation}
	\sum_{c\in \mathcal{C}(p)} - l_{pc}\mathbf{n}_{pc} = 0.
\end{equation} 
From now on, we define $l_{cp}\mathbf{n}_{cp} = - l_{pc}\mathbf{n}_{pc} $.

After introducing all the spatial ingredients, we can define our compatible discrete nabla operators as follows: 
\begin{equation}
	\nabla_c^p \ast = \frac{1}{|\omega_c|} \sum_{p \in \mathcal{P}(c)} l_{pc} \mathbf{n}_{pc} \ast, \hspace{1cm} \nabla_p^c \ast = \frac{1}{|\omega_p|}\sum_{c \in \mathcal{C}(p)} l_{cp} \mathbf{n}_{cp} \ast,
	\label{operators}
\end{equation}
where the first operator is defined on the \textit{primal} mesh based on the triangular control volumes $\omega_c$, while the second one is defined on the \textit{dual} mesh built around each vertex of the triangulation. 

\paragraph{Boundary conditions}
Let $p$ be a node lying on the boundary of the domain $\partial \Omega$, as depicted on the right of Figure \ref{domain discretization}. The corner vectors on the boundary are given by
\begin{equation}
	l_{pb}\mathbf{n}_{pb} = l_{pb}^+\mathbf{n}_{pb}^+ + l_{pb}^-\mathbf{n}_{pb}^-,
\end{equation}
where the outward normals $l_{pb}^+\mathbf{n}_{pb}^+$ and $l_{pb}^-\mathbf{n}_{pb}^-$ are referred to the boundary side impinging on node $p$. These normals are essential to satisfy the discrete Gauss theorem also for a boundary node, and they are indeed linked to the corner vectors with the following relation:
\begin{equation}
	\sum_{c \in \mathcal{C}(p)} l_{pc}\mathbf{n}_{pc} = l_{pb}\mathbf{n}_{pb}.
\end{equation}

Now that the operators have been defined, we can prove the classical two basic vector calculus identities \eqref{basis vector calculus identities} in the discrete form.
Given a scalar field $\phi_p$ and a vector field $\mathbf{J}_p$, we can prove that
\begin{equation}
	\nabla_p^c\times \nabla_c^p \, \phi_p = 0, \hspace{1cm} \nabla_p^c \cdot (\nabla_c^p \times \mathbf{J}_p) = 0.
	\label{vector identity}
\end{equation}

In the following Lemma~\ref{identity}, we will prove the first discrete vector calculus identity, namely that the curl of the gradient of a scalar field is zero.
We also recall that this Lemma~\ref{identity} has already been proven in a slightly different context in~\cite{abgrall2025simple,boscheri2025structure,sidilkover2025spurious}.
\begin{lemma}
	In two space dimensions ($d = 2$) the discrete gradient operator applied to a scalar field $\phi_p$ defined in the mesh nodes $p$
	\begin{equation}
		\nabla_c^p \, \phi_p = \frac{1}{|\omega_c|}\sum_{p\in \mathcal{P}(c)}l_{pc}\mathbf{n}_{pc}\phi_p, 
	\end{equation}
	and its dual discrete curl operator applied to a vector field $\mathbf{J}_c$ defined in the cell centers
	\begin{equation}
		\nabla_p^c \times \mathbf{J}_c = \frac{1}{|\omega_p|}\sum_{c \in \mathcal{C}(p)}l_{pc}\mathbf{n}_{cp} \times \mathbf{J}_c,
	\end{equation}
	satisfy the first discrete vector calculus identity (\ref{vector identity}).
	\label{identity}
\end{lemma}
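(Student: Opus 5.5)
The plan is to compute $\nabla_p^c\times\nabla_c^p\phi_p$ at a fixed interior node $p$ directly from the definitions \eqref{operators}, and to show that the double sum telescopes around the dual cell $\omega_p$. Substituting the gradient into the curl gives
\begin{equation*}
\nabla_p^c\times\nabla_c^p\phi \;=\; \frac{1}{|\omega_p|}\sum_{c\in\mathcal{C}(p)} \frac{1}{|\omega_c|}\sum_{q\in\mathcal{P}(c)} \phi_q\,\bigl(l_{cp}\mathbf{n}_{cp}\times l_{qc}\mathbf{n}_{qc}\bigr),
\end{equation*}
where $\times$ in two dimensions is the scalar cross product $\mathbf{a}\times\mathbf{b}=a_1b_2-a_2b_1$. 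The task therefore reduces to evaluating the geometric coefficients $l_{pc}\mathbf{n}_{pc}\times l_{qc}\mathbf{n}_{qc}$ on a single triangle.

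\textbf{Step 1: identify the corner vectors of a triangle.} The two half-edge normals meeting at $p$ sum to one half of the rotated vector joining the two other vertices $q^-,q^+$ of $\omega_c$. Hence $l_{pc}\mathbf{n}_{pc}$ is half the outward normal of the edge opposite to $p$, with the sign reversed. Equivalently, $l_{pc}\mathbf{n}_{pc}=-|\omega_c|\,\nabla\lambda_p^c$, where $\lambda_p^c$ is the $P^1$ barycentric coordinate of $p$ on $\omega_c$. This is consistent with $\sum_p l_{pc}\mathbf{n}_{pc}=0$, because $\sum_p\lambda_p^c\equiv1$. Consequently $\nabla_c^p\phi=-\nabla\phi_h|_{\omega_c}$, where $\phi_h$ is the continuous piecewise linear interpolant of the nodal values $\phi_p$.

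\textbf{Step 2: compute the coefficients on one triangle.} For $q=p$ the coefficient vanishes, since $\mathbf{a}\times\mathbf{a}=0$. For the two other vertices, orient $\omega_c$ counterclockwise as $(p,q^+,q^-)$. Using $\nabla\lambda_p\times\nabla\lambda_{q}=\pm\frac{1}{2|\omega_c|}$, with opposite signs for $q^+$ and $q^-$, I expect
\begin{equation*}
\frac{1}{|\omega_c|}\, l_{cp}\mathbf{n}_{cp}\times\!\!\sum_{q\in\mathcal{P}(c)}\! l_{qc}\mathbf{n}_{qc}\phi_q \;=\; \pm\tfrac12\,\bigl(\phi_{q^+}-\phi_{q^-}\bigr).
\end{equation*}
This says that each triangle contributes half the difference of $\phi$ along its edge opposite to $p$. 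The factor $|\omega_c|$ cancels, which is exactly why the gradient is divided by $|\omega_c|$ while the curl is not weighted per cell.

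\textbf{Step 3: telescope.} Order the triangles of $\mathcal{C}(p)$ counterclockwise around $p$. Two consecutive triangles share the edge from $p$ to a common neighbour, so the $q^+$ of one triangle is the $q^-$ of the next. The sum $\sum_{c\in\mathcal{C}(p)}(\phi_{q^+}-\phi_{q^-})$ is therefore a closed telescoping sum around the ring of neighbours of an interior node, and it equals zero. An alternative, equivalent argument is $\sum_c|\omega_c|\nabla\lambda_p\times\nabla\phi_h=\int\nabla\lambda_p\times\nabla\phi_h$. One integrates by parts elementwise and uses that $\lambda_p$ vanishes on the patch boundary, while the tangential derivative of the continuous $\phi_h$ is single valued on the interior edges, so those contributions cancel.

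The main obstacle is the orientation and sign bookkeeping in Step 2. The signs of $l_{cp}\mathbf{n}_{cp}=-l_{pc}\mathbf{n}_{pc}$ and of the cross products must be consistent over all triangles, since otherwise the telescoping fails. I would fix a counterclockwise convention once and verify it on a reference right triangle. A second point is boundary nodes. There the ring of neighbours is open, and the curl must include the boundary corner vector $l_{pb}\mathbf{n}_{pb}$ so that the two leftover end terms cancel. Otherwise the identity has to be stated only for interior nodes, which I would take as the primary claim.
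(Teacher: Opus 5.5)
Your proposal is correct and follows essentially the paper's route. You identify $\nabla_c^p$ with the $P^1$ finite element gradient, show that each triangle contributes $\pm\frac12(\phi_{q^+}-\phi_{q^-})$ (the paper writes this as the edge-midpoint difference $\phi_{e^+_{pc}}-\phi_{e^-_{pc}}$), and telescope around the interior node.

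One harmless slip: since $l_{pc}\mathbf{n}_{pc}$ is minus half the outward normal of the edge opposite $p$, it equals $+|\omega_c|\nabla\lambda_p^c$, not $-|\omega_c|\nabla\lambda_p^c$, so $\nabla_c^p\phi=+\nabla\phi_h|_{\omega_c}$ as the paper verifies; a global sign cannot affect the vanishing of the curl.
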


\begin{proof}
	Let us consider a generic control volume $\omega_c$ with indices of the three nodes $p_1, p_2$ and $p_3$ and Eulerian nodes coordinates $\mathbf{x}_{p1}, \mathbf{x}_{p2}$ and $\mathbf{x}_{p3}$, respectively. We assume that the nodes are ordered counter-clockwise. The indices of the edges of $\omega_c$ are denoted by $e_1, e_2$ and $e_3$, respectively, with edge $e_1$ composed of nodes $p_1$ and $p_2$, edge $e_2$ composed of nodes $p_2$ and $p_3$ and edge $e_3$ composed of nodes $p_3$ and $p_1$. Assuming a linear distribution of the scalar field $\phi_p$ inside the control volume $\omega_c$ yields the following averages in the edge midpoints:
	\begin{equation}
		\phi_{e_1} = \frac{1}{2}\left(\phi_{p1} + \phi_{p2}\right), \, \phi_{e_2} = \frac{1}{2}\left(\phi_{p2} + \phi_{p3}\right), \, \phi_{e_3} = \frac{1}{2}\left(\phi_{p3} + \phi_{p1}\right).
	\end{equation}
	The three corner normals are given by
	\begin{equation}
		\begin{aligned}
			l_{p1c}\mathbf{n}_{p1c} &= \frac{1}{2}\left((y_{p2} - y_{p3}), (x_{p3} - x_{p2}), 0\right)^T,\\
			l_{p2c}\mathbf{n}_{p2c} &= \frac{1}{2}\left((y_{p3} - y_{p1}), (x_{p1} - x_{p3}), 0\right)^T,\\
			l_{p3c}\mathbf{n}_{p3c} &= \frac{1}{2}\left((y_{p1} - y_{p2}), (x_{p2} - x_{p1}), 0\right)^T,\\
		\end{aligned}
	\end{equation}
	while the area of the element $\omega_c$ is 
	\begin{equation}
		|\omega_c| = \frac{1}{2}\left(x_{p1}y_{p2} - x_{p1}y_{p3} - x_{p2}y_{p1} + x_{p2}y_{p3} + x_{p3}y_{p1} - x_{p3}y_{p2}  \right).
	\end{equation}
	The discrete gradient $\nabla_c^p \,\phi_p = \frac{1}{|\omega_c|}\sum_pl_{pc}\mathbf{n}_{pc}\phi_p$ of the scalar filed $\phi_p$ in the cell $c$ then reads
	\begin{equation}
		\nabla_c^p \, \phi_p = \frac{1}{2|\omega_c|} 
		\begin{pmatrix}
			(y_{p2} - y_{p3})\phi_{p1} + (y_{p3} - y_{p1})\phi_{p2} + (y_{p1} - y_{p2})\phi_{p3} \\
			(x_{p3} - x_{p2})\phi_{p1} + (x_{p1} - x_{p3})\phi_{p2} + (x_{p2} - x_{p1})\phi_{p3} \\
			0
		\end{pmatrix}.
	\end{equation}
	We now show that this gradient is identical to the discrete gradient obtained by using continuous P1 Lagrange finite elements inside the control volume $\omega_c$. The associated basis functions in the universal reference element $T_0 = \{\xi_1, \xi_2 : 0 \le \xi_1 < 1, \, 0\le \xi_2 \le 1-\xi_1 \}$ with nodes (0,0), (1,0) and (0,1) and reference coordinates $\boldsymbol{\xi} = (\xi_1, \xi_2)$ read
	\begin{equation}
		\psi_{p1} = \psi_{p1}(\boldsymbol{\xi}) = 1 - \xi_1 - \xi_2, \quad \psi_{p2} = \psi_{p2}(\boldsymbol{\xi}) = \xi_1, \quad \psi_{p3} = \psi_{p3}(\boldsymbol{\xi}) = \xi_2.
	\end{equation}
	The Jacobian of the mapping 
	\begin{equation}
		\mathbf{x} = \mathbf{x}(\boldsymbol{\xi}) = \psi_{p1}(\boldsymbol{\xi})\mathbf{x}_{p1} + \psi_{p2}(\boldsymbol{\xi})\mathbf{x}_{p2} + \psi_{p3}(\boldsymbol{\xi})\mathbf{x}_{p3} = \sum_{p\in \mathcal{P}(c)} \psi_p \mathbf{x}_p,
	\end{equation}
	reads
	\begin{equation}
		\mathbf{M}_c(\mathbf{x}) = \frac{\partial \mathbf{x}}{\partial \boldsymbol{\xi}} = 
		\begin{pmatrix}
			x_{p2} - x_{p1} \quad x_{p3} - x_{p1}\\
			y_{p2} - y_{p1} \quad y_{p3} - y_{p1}
		\end{pmatrix}
		= \sum _{p \in \mathcal{P}(c)}\frac{\partial \psi_p}{\partial \boldsymbol{\xi}}\mathbf{x}_p,
	\end{equation}
	with $|\mathbf{M}_c| = 2|\omega_c|$. The discrete gradient of a discrete scalar quantity $\phi_h = \phi_{p1}\psi_{p1} + \phi_{p2}\psi_{p2} + \phi_{p3}\psi_{p3}$ is then given by
	\begin{equation}
	\begin{split}
	\nabla_c^p\phi_p &= \mathbf{M}_c^{-T}\frac{\partial \phi_h}{\partial \boldsymbol{\xi}} = \mathbf{M}_c^{-T} \sum _{p\in \mathcal{P}(c)} \frac{\partial \psi_p}{\partial \boldsymbol{\xi}}\phi_p\\
	&= \frac{1}{2|\omega_c|}
	\begin{pmatrix}
		(y_{p2} - y_{p3})\phi_{p1} + (y_{p3} - y_{p1})\phi_{p2} + (y_{p1} - y_{p2})\phi_{p3} \\
		(x_{p3} - x_{p2})\phi_{p1} + (x_{p1} - x_{p3})\phi_{p2} + (x_{p2} - x_{p1})\phi_{p3} \\
		0
	\end{pmatrix}
	= \nabla_c^p\phi_p.
	\end{split}
	\end{equation}
	In the following, we denote a generic node of element $\omega_c$ by $p$, as usual, and the corresponding two nodes on the opposite edge are denoted in counter-clockwise order by $p^-$ and $p^+$, respectively. Furthermore, we denote by $e^-_{pc}$ the edge connecting $p$ with $p^-$, and by $e^+_{pc}$ the edge connecting $p$ with $p^+$. We now compute the single contribution of element $\omega_c$ and node $p$ to the curl on the dual mesh $|\omega_p|\nabla_p^c \times \nabla_c^p\phi_p = -\sum_c l_{pc} \mathbf{n}_{pc} \times \nabla_c^p \phi_p$ i.e. the contribution $-l_{pc}\mathbf{n}_{pc}\times \nabla_c^p\phi_p$, which after straightforward calculation reduces to
	\begin{equation}
		-l_{pc} \mathbf{n}_{pc} \times \nabla_c^p , \phi_p = \frac{1}{2}\left(\phi_{p^+} - \phi_{p^-} \right) = \phi_{e^+_{pc}} - \phi_{e^-_{pc}}, 
	\end{equation} 
	i.e. the difference of values at the respective edge midpoints. Consider now two adjacent elements $\omega_c$ and $\omega_d$ which share a common edge $e$ composed of node $p$ and $q$. We assume that the elements are ordered counter clockwise, i.e. the previous element is $\omega_c$ and the subsequent element is $\omega_d$. The value at the edge midpoint given by the previous element $\omega_c$ is denoted by $\phi^-_e = \phi_{e^+_{pc}}$ and the value at the edge midpoint given by the subsequent element $\omega_d$ is denoted by $\phi^+_e = \phi_{e^-_{pc}}$. Since the values of the scalar filed $\phi$ at the edge midpoint are the \textit{same} for two adjacent elements that share a common edge $e$, i.e. $\phi^+ = \phi^- = \frac{1}{2}(\phi_p + \phi_q)$, summing up over all elements $\omega_c$ around the node $p$ yields zero, due to the telescopic sum property:
	\begin{equation}
		|\omega_p| \nabla_p^c \times \nabla_c^p \, \phi_p = - \sum_{c \in \mathcal{C}(p)}l_{pc}\mathbf{n}_{pc} \times \nabla_c^p\, \phi_p = \sum_{e \in \mathcal{E}(p)} \phi_e^+ - \phi_e^- = 0.
	\end{equation}
	Here, $\mathcal{E}(p)$ is the set of edge connected to node $p$. The above identity completes the proof that the discrete curl on the dual mesh applied to the discrete gradient on the primal mesh is zero. 
\end{proof}
Moreover, in a similar manner it is possible to prove that the discrete dual divergence of the discrete primary curl is zero, see \cite{abgrall2025simple}.

Finally, the time coordinate $t$ is defined in the interval $[0; t_f]$ and is approximated by a sequence of discrete points $t^n$ such that
\begin{equation}
	t^{n+1} = t^n + \Delta t,
\end{equation}
where the time step $\Delta t$ is computed according to the CFL stability condition as
\begin{equation}
	\Delta t \le \text{CFL} \min_c\frac{h(c)}{\lambda(c)},
\end{equation}
where $h(c)$ is the incircle diameter of control volume $\omega_c$ and $\lambda$ involves only the eigenvalues related to the explicit subsystem, while pressure is always treated implicitly.

\subsection{Semi-implicit discretization of the equations}

To discretize the governing partial differential equations, we first need to clarify where the relevant variables are defined. 
From now on, we will use the subscript \textit{c} for variables defined inside triangle $\omega_c$ and the subscript \textit{p} for variables defined at mesh nodes. 

Sometimes, it is possible that a quantity discretized on the mesh nodes needs to be used within a discretization at the barycenters and vice versa. For example, to define the velocity discretized on the mesh nodes $\mathbf{u}_p$ starting from values of $\mathbf{u}_c$ given at the barycenters of the control volumes $\omega_c$, a weighted average approach is used.
Specifically, the velocity at the node $p$ is computed as: 
\begin{equation}
	\mathbf{u}_p = \frac{1}{|\omega_p|}\sum_{c \in \mathcal{C}(p)}|\omega_{pc}| \, \mathbf{u}_c,
\end{equation}
where $|\omega_p|$ is the total area of the \textit{star} polygon associated with the node $p$, $|\omega_{pc}|$ is the area of the subvolume $\omega_{pc} = \omega _c \cap \omega_p$ and $\mathbf{u}_c$ is the velocity at the barycenter $c$. In this way, the velocity at the mesh node is obtained by averaging the velocity at the barycenters, weighted by the areas of the subvolumes surrounding the node $p$.

Vice versa, to define the density $\rho_c$ on the barycenters of triangles $\omega_c$, starting from values of $\rho_p$ defined at the mesh node $p$, a similar weighted average approach can be employed. In this case, the density at the barycenter of  the control volume $\omega_c$ is computed averaging the densities at the vertices belonging to triangle $\omega_c$, with each vertex contributing equally. Specifically:
\begin{equation}
	\rho_c = \frac{1}{3}\sum_{p\in \mathcal{P}(c)} \rho_p,
\end{equation}
where $\mathcal{P}(c)$ is the set of vertexes belonging to control volume $\omega_c$.

\subsubsection{Incompressible Euler equations}
Using the notations and the operators introduced before, and assuming constant density $\rho_c = \rho = const$, we first discretize the incompressible Euler equations in a semi-implicit manner as they represent the essential core of the numerical discretization of all the other equations. We proceed as in classical projection schemes, \cite{harlow1965numerical,patankar2018numerical,Casulli1999,Guer06}, i.e. we first compute an intermediate momentum and velocity field via an \textit{explicit} finite volume method as follows: 
\begin{equation}
	\mathbf{m}_c^* = \mathbf{m}_c^n - \frac{\Delta t}{|\omega_c|}\sum_{a \in \mathcal{N}_c}|\partial \omega_{ac}|\mathbf{f}_{ac}^{\textnormal{rus}}, 
	\qquad \mathbf{u}_c^* = \frac{\mathbf{m}_c^*}{\rho_c},
	\label{star equations}
\end{equation}
where $\mathcal{N}_c$ is the set of neighboring control volumes $\omega_{ac}$ that share an edge with element $\omega_c$ and $\mathbf{f}_{ac}^{\textnormal{rus}}$ is the classical edge-based Rusanov flux computed as 
\begin{equation}
	\mathbf{f}_{ac}^{\textnormal{rus}} = \frac{1}{2}( \mathbf{F}(\mathbf{Q}_a) + \mathbf{F}(\mathbf{Q}_c)) \cdot \mathbf{n}_{ac} - \frac{1}{2} s_{\max} (\mathbf{u}_a - \mathbf{u}_c).
	\label{eqn.rusanov} 
\end{equation}
Here, \textit{\textbf{Q}} is the vector of state variables, $s_{max}$ is the maximum characteristic velocity computed as $s_{max} = \operatorname{max}(|\lambda_{\operatorname{max}}(\mathbf{Q}_a)|, |\lambda_{\operatorname{max}}(\mathbf{Q}_c)|    )$ and
$\mathbf{F} = \rho \mathbf{u} \otimes \mathbf{u}$ is the flux tensor that includes the nonlinear convective terms. 

Whereas convective terms are treated explicitly, the pressure term is handled implicitly. At the aid of the intermediate momentum $\mathbf{m}_c^*$, we can rewrite the discrete momentum equation as
\begin{equation}
	\mathbf{m}_c^{n+1} = \mathbf{m}_c^* -\Delta t\nabla_c^p p_p^{n+1},
	\label{velocity update with u*}
\end{equation}
which is combined with the discrete divergence-free condition of the velocity field 
\begin{equation}
\nabla_p^c \cdot \mathbf{u}_c^{n+1} = 0.
\end{equation} 
Since $\rho_c = \rho = const$ we have 
\begin{equation}
	\nabla_p^c \cdot \mathbf{m}_c^{n+1} = 0.
	\label{eqn.divvm} 
\end{equation} 
Inserting \eqref{velocity update with u*} into \eqref{eqn.divvm} leads to the discrete pressure Poisson equation 
\begin{equation}
	\nabla_p^c \cdot \nabla_c^p \, p_p^{n+1} = \frac{1}{\Delta t} \nabla_p^c \cdot \mathbf{m}_c^*,
	\label{pressure correction equation}
\end{equation}
which results in a symmetric and positive definite system that is solved for the pressure at the next time step $p_p^{n+1}$ using an iterative method, such as the matrix-free Conjugate Gradient (CG) method \cite{cgmethod}. 
Once the new pressure is known, the momentum can be updated via \eqref{velocity update with u*}. 

\subsubsection{Weakly compressible Euler and Navier-Stokes equations}

In the case of the weakly compressible Euler and Navier-Stokes equations, the discrete momentum equation reads as follows: 
\begin{equation}
	\mathbf{m}_c^* = \mathbf{m}_c^n - \frac{\Delta t}{|\omega_c|}\sum_{a \in \mathcal{N}_c}|\partial \omega_{ac}|\mathbf{f}_{ac}^{\textnormal{rus}}
	 - \Delta t \nabla_c^p\cdot \boldsymbol{\sigma}_p, 
	\qquad \mathbf{u}_c^* = \frac{\mathbf{m}_c^*}{\rho_c^n}.
	\label{eqn.mom.ns}
\end{equation}
In case of an explicit discretization of the viscous stress tensor we simply have 
\begin{equation}
	\boldsymbol{\sigma}_p = -\mu \left( (\nabla_p^c \mathbf{u}_c^n) + (\nabla_p^c \mathbf{u}_c^n)^T - \frac{2}{3} (\nabla_p^c \cdot \mathbf{u}_c^n) \mathbf{I}  \right),  
\end{equation}
while an implicit discretization of the viscous stress tensor reads 
\begin{equation}
	\boldsymbol{\sigma}_p = -\mu \left( (\nabla_p^c \mathbf{u}_c^*) + (\nabla_p^c \mathbf{u}_c^*)^T - \frac{2}{3} (\nabla_p^c \cdot \mathbf{u}_c^*) \mathbf{I}  \right),  
\end{equation}
and requires the solution of a symmetric positive definite linear system for the star velocity $\mathbf{u}_c^*$. 
The discrete momentum equation can still be written in terms of $ \mathbf{m}_c^*$ as 
\begin{equation}
	\mathbf{m}_c^{n+1} = \mathbf{m}_c^* -\Delta t\nabla_c^p p_p^{n+1}.
	\label{eqn.mom}
\end{equation}
In the weakly compressible case, the discrete mass conservation equation reads
\begin{equation}
		\rho(p_p^{n+1}) = \rho_p^{n} - \Delta t \nabla_p^c \cdot \mathbf{m}_c^{n+1}.
		\label{eqn.conti} 
\end{equation} 
Making use of the isothermal equation of state $p = \rho c_0^2$ leads to 
\begin{equation}
	\frac{1}{c_0^2} \, p_p^{n+1} = \frac{1}{c_0^2} \, p_p^{n} - \Delta t \nabla_p^c \cdot \mathbf{m}_c^{n+1}.
	\label{eqn.conti.p} 
\end{equation} 
Inserting the discrete momentum equation \eqref{eqn.mom} into the discrete mass conservation equation \eqref{eqn.conti.p} yields the following discrete wave equation for the pressure:
\begin{equation}
	\frac{p^{n+1}_p}{c_0^2} - \Delta t^2 \nabla_p^c \cdot \nabla_c^p p_p^{n+1} 
	= \frac{p^n_p}{c_0^2} - \Delta t \nabla_p^c \cdot \mathbf{m}_c^*.
	\label{pressure correction for the compressible case}
\end{equation}
Once the new pressure is known, the momentum is updated via \eqref{eqn.mom} and the density via \eqref{eqn.conti}. It is easy to see that for $c_0 \to \infty$, i.e. in the low Mach number limit $M \to 0$, the discrete pressure equation \eqref{pressure correction for the compressible case} reduces to the incompressible pressure Poisson equation \eqref{pressure correction equation}. Furthermore, since for $M \to 0$ we have $\nabla_p^c \cdot \mathbf{m}_c^{n+1} \to 0$, an immediate result is that $\rho_p^{n+1} \to \rho_p^n$, i.e. density will remain constant if it was initially constant. Hence, our new method is asymptotic-preserving (AP) in the low Mach number limit. 

\subsubsection{Incompressible MHD equations}
The numerical scheme for the incompressible MHD equations is a straightforward extension of the method presented initially for the incompressible Euler equations. For incompressible MHD, the compatible discretization of the magnetic field reads
\begin{equation}
	\mathbf{B}_c^{n+1} = \mathbf{B}_c^{n} - \Delta t \, \nabla_c^p \times \mathbf{E}_p^n, 
	\qquad \textnormal{ with }  \qquad \mathbf{E}_p^n = - \mathbf{u}^n_p \times \mathbf{B}^n_p,
\end{equation}
while the flux tensor to be used in~\eqref{eqn.rusanov} is given by $\mathbf{F} = \rho\mathbf{u}\otimes\mathbf{u} - \mathbf{B}\otimes\mathbf{B} + \frac{1}{2} \mathbf{B}^2 \mathbf{I}$. In order to ensure an exactly divergence-free magnetic field for all times, at the initial time $t=0$ the magnetic field must be 
derived from a vector potential $\mathbf{A}_p$ such that $\mathbf{B}_c = \nabla_c^p \times \mathbf{A}_p$. 

\subsubsection{Incompressible GPR model}
Also the discretization of the incompressible GPR model is a straightforward extension of the incompressible Euler equations. For the distortion field equation, the compatible discretization reads 
\begin{equation}
     \mathbf{A}_c^{n+1} = \mathbf{A}_c^n - \Delta t \nabla_c^p\,(\mathbf{A}^n_p \mathbf{u}^n_p) - \Delta t \mathbf{u}^n_c\cdot \nabla_c^p \times \mathbf{A}^n_p,
\end{equation}
while the flux tensor to be used in \eqref{eqn.rusanov} is $\mathbf{F} = \rho \mathbf{u} \otimes \mathbf{u} + \rho c_s^2 \mathbf{G} \operatorname{dev}(\mathbf{G})$ with $\mathbf{G} = \mathbf{A}^T \mathbf{A}$.

\subsection{Fully-discrete energy stability of the implicit pressure subsystem}
For the \textit{incompressible} equations treated in this paper the chosen compatible semi-implicit discretization leads to a kinetic energy stable scheme for the pressure subsystem. 
\begin{lemma}
	The implicit pressure subsystem in the incompressible equations is kinetic energy stable at the fully-discrete level, in the sense that for the subsystem 
	\begin{equation}
	\begin{aligned}
		\rho\frac{\mathbf{u}^{n+1}_c-\mathbf{u}^*_c}{\Delta t} + \nabla_c^p p_p^{n+1} &= 0, \\
		\nabla_p^c \cdot \mathbf{u}_c^{n+1} & = 0,
	\end{aligned}
	\end{equation}
	the total kinetic energy is non increasing, i.e. summing over all cells in the domain we have 
	\begin{equation}
		\sum_c \frac{1}{2} \rho |\omega_c| \left( \mathbf{u}_c^{n+1} \right)^2 \leq 
		\sum_c \frac{1}{2} \rho |\omega_c| \left( \mathbf{u}_c^{*} \right)^2
	\end{equation}  
\end{lemma}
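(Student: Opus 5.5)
The plan is the standard projection-energy argument: dot the momentum update with $\mathbf{u}_c^{n+1}$, use the discrete duality between the primal gradient $\nabla_c^p$ and the dual divergence $\nabla_p^c\cdot$ to cancel the pressure work via the divergence-free constraint, and conclude with an elementary polarization identity.

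\textbf{Step 1 (energy identity).} Multiply the first equation of the subsystem by $\Delta t\,|\omega_c|\,\mathbf{u}_c^{n+1}$ and sum over all cells $c$. With the identity $(\mathbf{a}-\mathbf{b})\cdot\mathbf{a} = \tfrac12\mathbf{a}^2 - \tfrac12\mathbf{b}^2 + \tfrac12(\mathbf{a}-\mathbf{b})^2$, this gives
\begin{equation*}
\sum_c \tfrac12\rho|\omega_c|(\mathbf{u}_c^{n+1})^2 - \sum_c \tfrac12\rho|\omega_c|(\mathbf{u}_c^{*})^2 + \sum_c \tfrac12\rho|\omega_c|(\mathbf{u}_c^{n+1}-\mathbf{u}_c^*)^2 = -\Delta t \sum_c |\omega_c|\,\mathbf{u}_c^{n+1}\cdot\nabla_c^p p_p^{n+1}.
\end{equation*}
The third term on the left is nonnegative. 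It therefore suffices to show that the right-hand side vanishes.

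\textbf{Step 2 (discrete duality).} Insert the definition \eqref{operators} of $\nabla_c^p$ and swap the double sum over pairs $(c,p)$ with $p\in\mathcal{P}(c)$, equivalently $c\in\mathcal{C}(p)$:
\begin{equation*}
\sum_c |\omega_c|\,\mathbf{u}_c^{n+1}\cdot\nabla_c^p p_p^{n+1} = \sum_c\sum_{p\in\mathcal{P}(c)} l_{pc}\mathbf{n}_{pc}\cdot\mathbf{u}_c^{n+1}\,p_p^{n+1} = -\sum_p p_p^{n+1}\,|\omega_p|\,\nabla_p^c\cdot\mathbf{u}_c^{n+1},
\end{equation*}
using $l_{cp}\mathbf{n}_{cp} = -l_{pc}\mathbf{n}_{pc}$. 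By the second equation of the subsystem, every term of the last sum is zero, which finishes the proof. This is the discrete counterpart of $\int \mathbf{u}\cdot\nabla p = -\int p\,\nabla\cdot\mathbf{u}$, and it rests only on the compatible definition of the two operators in \eqref{operators}.

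\textbf{Main obstacle: boundary nodes.} For a node $p$ on $\partial\Omega$, the dual divergence involves the boundary corner vector $l_{pb}\mathbf{n}_{pb}$. The duality then produces an extra term of the form $\sum_{p\in\partial\Omega} p_p^{n+1}\, l_{pb}\mathbf{n}_{pb}\cdot\mathbf{u}_p^{n+1}$. I would handle it by assuming boundary conditions that make it vanish. Two natural choices are:
\begin{itemize}
\item periodic boundaries, where every node is interior;
\item wall (no-penetration) conditions, where the boundary flux $l_{pb}\mathbf{n}_{pb}\cdot\mathbf{u}$ is zero; here the constraint $\nabla_p^c\cdot\mathbf{u}_c^{n+1}=0$ at boundary nodes is understood to include the boundary contribution.
\end{itemize}
Under either condition the boundary term drops out and the inequality follows.
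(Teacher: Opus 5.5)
Your proposal is correct and follows essentially the same route as the paper: test the momentum update with $\mathbf{u}_c^{n+1}|\omega_c|$, apply the polarization identity, and cancel the pressure work against the divergence constraint tested with $p_p^{n+1}|\omega_p|$, using $l_{cp}\mathbf{n}_{cp}=-l_{pc}\mathbf{n}_{pc}$ and the exchange of the double sum over cell--node pairs. Your treatment of boundary nodes is an extra precaution the paper does not take; with $\nabla_p^c$ defined exactly as in \eqref{operators}, which has no boundary corner term, and with the periodic domains used in the tests, the cancellation is exact without any further assumption.
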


\begin{proof}
	The total kinetic energy at time $t^n$ is defined as
	$ 	\mathcal{E}^n = \sum_{c} |\omega_c| \frac{1}{2} \rho (\mathbf{u}_c^n)^2$.
	We now multiply the momentum equation of the pressure subsystem by $\mathbf{u}_c^{n+1} |\omega_c|$ and the divergence-free condition by $p_p^{n+1} |\omega_p|$.
	We obtain
	\begin{equation}
		\begin{aligned}
			\rho \, \mathbf{u}_c^{n+1} |\omega_c| \cdot \frac{\mathbf{u}_c^{n+1}-\mathbf{u}_c^{*}}{\Delta t} + \mathbf{u}_c^{n+1} |\omega_c| \cdot \nabla_c^p p_p^{n+1} &= 0, \\
			p_p^{n+1} |\omega_p| \nabla_p^c \cdot \mathbf{u}_c^{n+1} & = 0,
		\end{aligned}
	\end{equation}
	and arranging the equations and substituting the discrete formulations of our compatible nabla operators \eqref{operators}, we obtain
	\begin{equation}
		\begin{aligned}
			\rho |\omega_c| \frac{\mathbf{u}_c^{n+1} \cdot \mathbf{u}_c^{n+1}-\mathbf{u}_c^{n+1} \cdot \mathbf{u}_c^{*}}{\Delta t} + \mathbf{u}_c^{n+1} \cdot \sum_{p\in\mathcal{P}(c)}l_{pc}\mathbf{n}_{pc} p_p^{n+1} &= 0, \\
			p_p^{n+1} \sum_{c\in \mathcal{C}(p)}l_{cp}\mathbf{n}_{cp} \cdot \mathbf{u}_c^{n+1} & = 0.
		\end{aligned}
		\label{discretized euler for kinetic energy}
	\end{equation} 
	The numerator in the first term of the first equation can be rewritten as
	\begin{eqnarray}		
		\mathbf{u}_c^{n+1} \cdot \mathbf{u}_c^{n+1}-\mathbf{u}_c^{n+1} \cdot \mathbf{u}_c^{*} & = & \mathbf{u}_c^{n+1} \cdot \mathbf{u}_c^{n+1}-\mathbf{u}_c^{n+1} \cdot \mathbf{u}_c^{*} + \mathbf{u}_c^{*} \cdot \mathbf{u}_c^{*} - \mathbf{u}_c^{*} \cdot \mathbf{u}_c^{*} \nonumber \\ 
		& = & \frac{1}{2} \left( \mathbf{u}_c^{n+1} - \mathbf{u}_c^{*} \right)^2 + \frac{1}{2} \left( \mathbf{u}_c^{n+1} \right)^2 - \frac{1}{2} \left( \mathbf{u}_c^{*} \right)^2.  
	\end{eqnarray}
	Since $\mathbf{u}_c^{n+1}$ does not depend on $p$ and $p_p^{n+1}$ does not depend on $c$ and with $l_{cp}=l_{pc}$ and $\mathbf{n}_{cp} = -\mathbf{n}_{pc}$ we can rewrite the equations as 
	\begin{equation}
	\begin{aligned}
		\rho |\omega_c| \frac{\frac{1}{2} \left( \mathbf{u}_c^{n+1} - \mathbf{u}_c^{*} \right)^2 + \frac{1}{2} \left( \mathbf{u}_c^{n+1} \right)^2 - \frac{1}{2} \left( \mathbf{u}_c^{*} \right)^2}{\Delta t} +  \sum_{p\in\mathcal{P}(c)}l_{pc}\mathbf{n}_{pc} \cdot \mathbf{u}_c^{n+1}  p_p^{n+1} &= 0, \\
		 -\sum_{c\in \mathcal{C}(p)}l_{pc}\mathbf{n}_{pc} \cdot \mathbf{u}_c^{n+1}  p_p^{n+1} & = 0.
	\end{aligned}
	\label{discretized euler for kinetic energy2}
	\end{equation} 
	Summing the first equation over all cells $c$ of the domain, summing the second equation over all points $p$ of the domain and summing up both equations yields 	
	\begin{equation}
		\sum \limits_c \rho |\omega_c| \frac{\frac{1}{2} \left( \mathbf{u}_c^{n+1} - \mathbf{u}_c^{*} \right)^2 + \frac{1}{2} \left( \mathbf{u}_c^{n+1} \right)^2 - \frac{1}{2} \left( \mathbf{u}_c^{*} \right)^2}{\Delta t} 
		= 0.
	\end{equation} 
	Multiplication by $\Delta t$ leads to 	
	\begin{equation}
		\sum \limits_c \rho |\omega_c| \frac{1}{2} \left( \mathbf{u}_c^{n+1} - \mathbf{u}_c^{*} \right)^2 + \sum \limits_c \rho |\omega_c| \frac{1}{2} \left( \mathbf{u}_c^{n+1} \right)^2 - \sum \limits_c \rho |\omega_c| \frac{1}{2} \left( \mathbf{u}_c^{*} \right)^2 = 0.  
	\end{equation}
Since $\left( \mathbf{u}_c^{n+1} - \mathbf{u}_c^{*} \right)^2 \geq 0$ we obtain the sought result 
	\begin{equation}
	 \sum \limits_c \rho |\omega_c| \frac{1}{2} \left( \mathbf{u}_c^{n+1} \right)^2 - \sum \limits_c \rho |\omega_c| \frac{1}{2} \left( \mathbf{u}_c^{*} \right)^2 \leq 0.  
\end{equation}
\vspace{3mm} 
\end{proof}

\section{Numerical results} 
\label{sec.results}
In this section, we validate the new numerical method proposed in this paper using several well-known benchmark problems. 
\subsection{Taylor-Green vortex}
Here we solve the well-known Taylor-Green vortex problem of the incompressible Euler and Navier-Stokes equations, see e.g. \cite{Tavelli2015} for the setup used here. We also solve the problem in the low Mach number limit of the weakly compressible Euler equations. 
We consider a periodic square domain $\Omega = [0, 2\pi]^2$ and set CFL = 0.9. In the incompressible limit, the exact solution of pressure, density and velocity field are given as follows:
\begin{equation}
	\begin{aligned}
		p(\mathbf{x}, t) &= P_0 + \frac{1}{4}\left(\cos(2x) + \cos(2y) \right) e^{-4\nu t},\\
		\rho(\mathbf{x}) &= 1,
	\end{aligned}
	\qquad
	\begin{aligned}
		u(\mathbf{x}) &= \phantom{-} \sin(x)  \cos(y) e^{-2\nu t},\\
		v(\mathbf{x}) &= -\cos(x) \sin(y) e^{-2\nu t},
	\end{aligned}	
	\qquad	
\end{equation}
where $\nu = \mu / \rho$ is the kinematic viscosity of the fluid and the constant $P_0$ can be chosen arbitrarily. For the first series of numerical experiments, carried out in the incompressible limit, we set $P_0 = -\frac{1}{2}$ so that $p(\mathbf{0}, t) = 0$, which we also impose strongly in the linear algebraic system for the pressure in order to fix the pressure constant inside the numerical scheme. 
We first solve the problem with $\nu=0$ (inviscid incompressible Euler equations) on a triangular mesh with a number of $N_x = N_y$ points along each boundary edge of the square domain. The simulation is run until a final time of $t_f = 0.25$ to verify that the discrete divergence of the velocity field remains zero up to machine precision as it is shown in Figure~\ref{fig:TaylorGreen incompressible Euler}.
We then run the same test again but on a sequence of successively refined unstructured triangular meshes in order to verify the order of convergence, see Table~\ref{convergence table euler}.

\begin{figure}[!bp]
	\centering
	\begin{minipage}{0.48\textwidth}
		\centering
		\includegraphics[width=\textwidth]{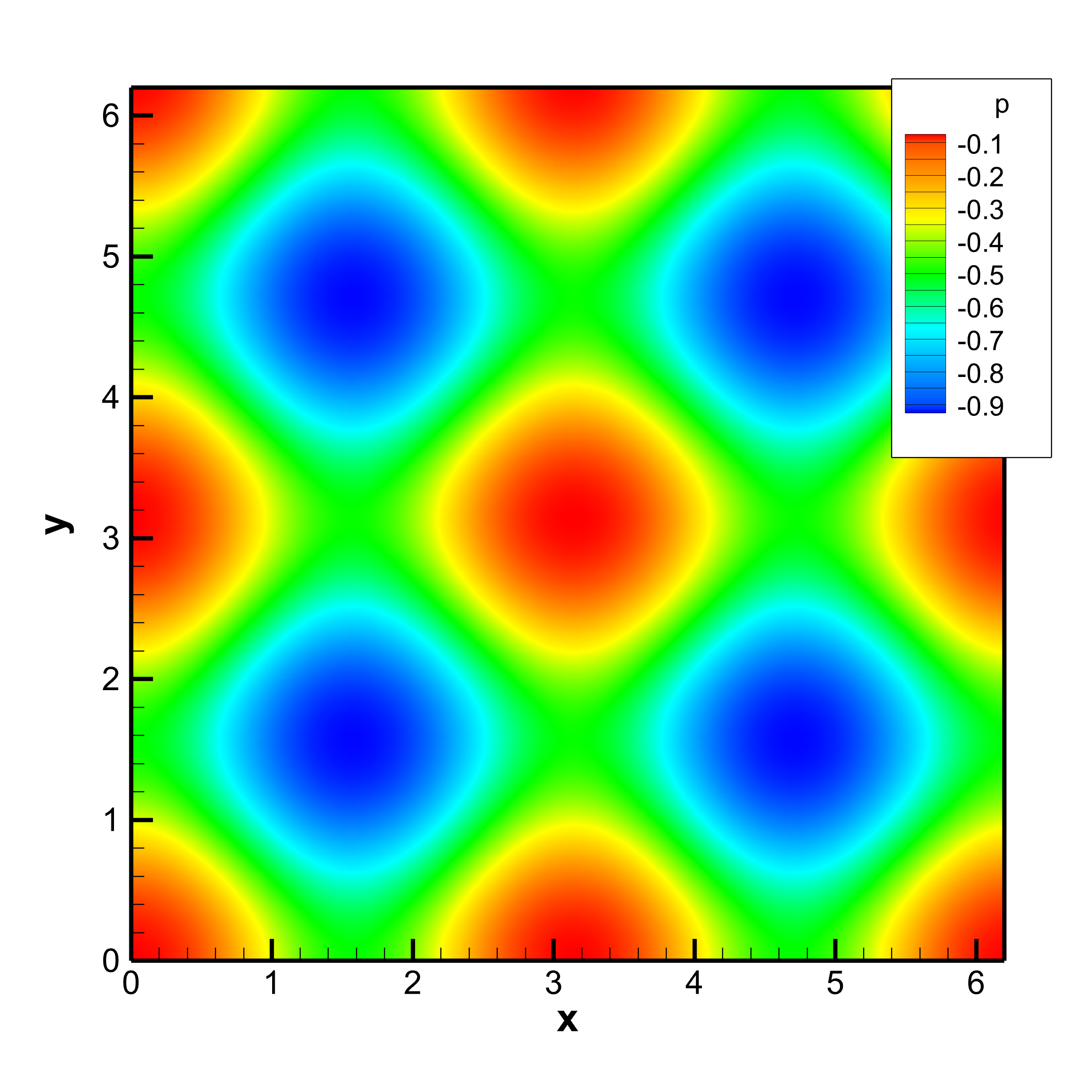}
	\end{minipage}
	\hfill
	\begin{minipage}{0.48\textwidth}
		\centering
		\includegraphics[width=\textwidth]{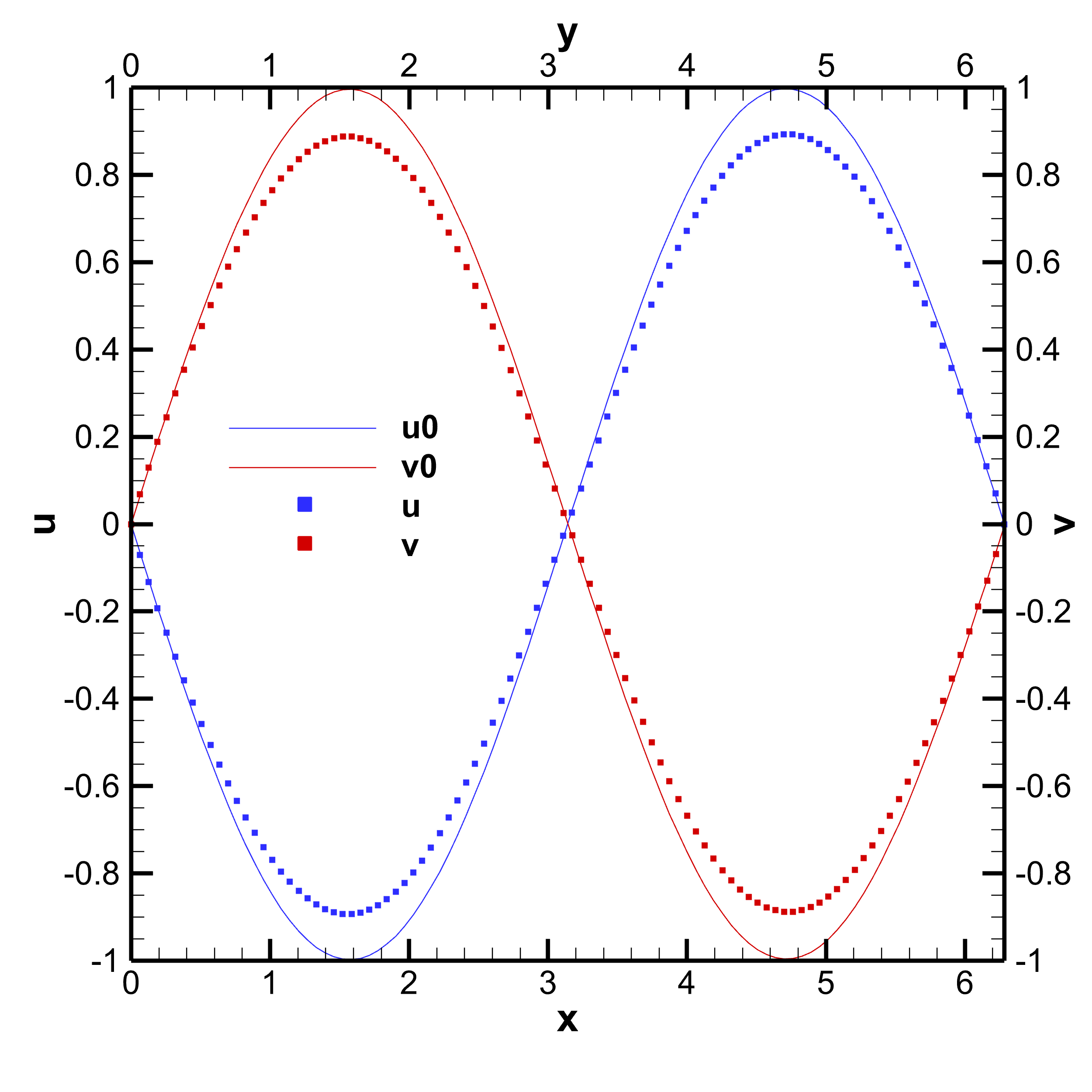}
	\end{minipage}
	
	\begin{minipage}{0.48\textwidth}
		\centering
		\includegraphics[width=\textwidth]{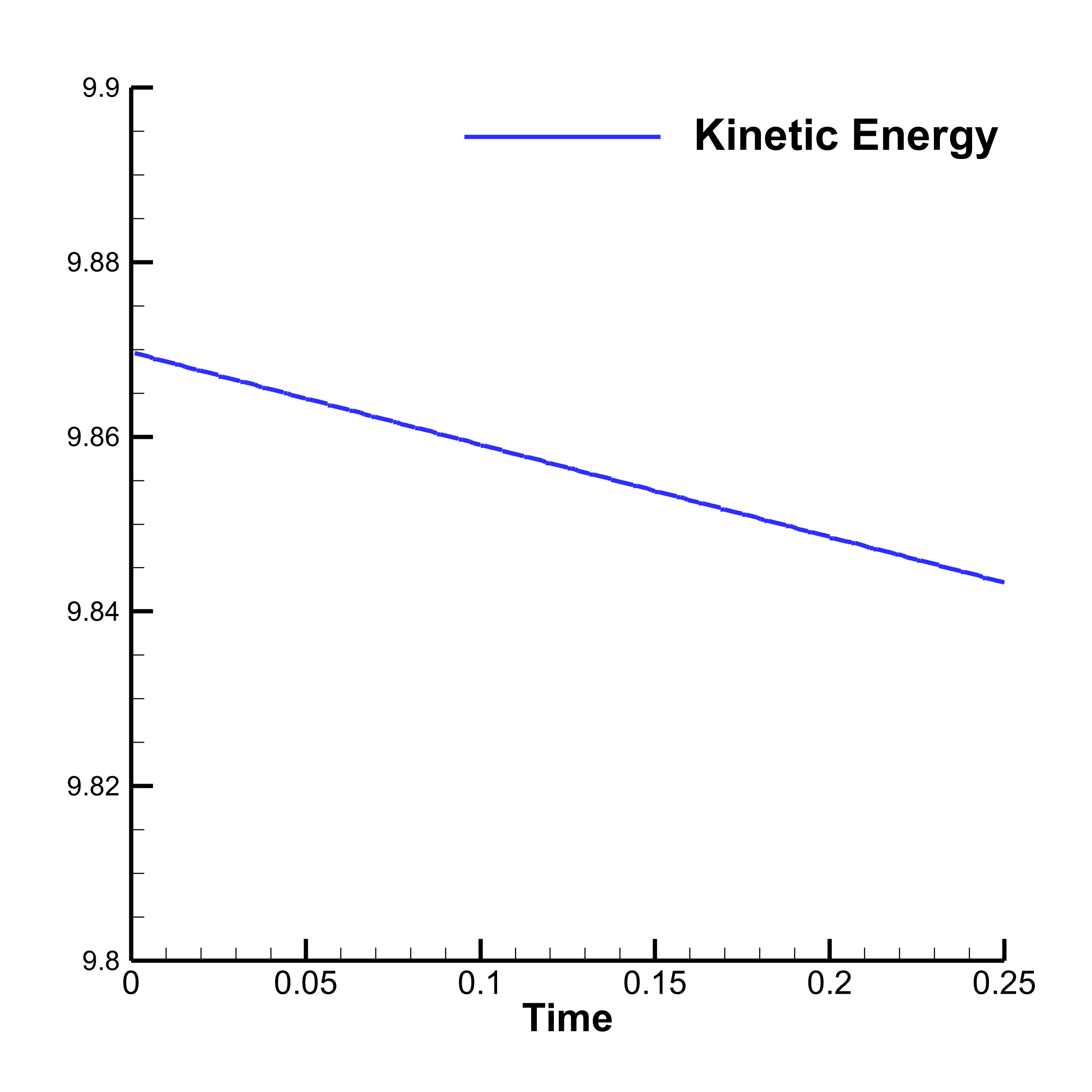}
	\end{minipage}
	\hfill
	\begin{minipage}{0.48\textwidth}
		\centering
		\includegraphics[width=\textwidth]{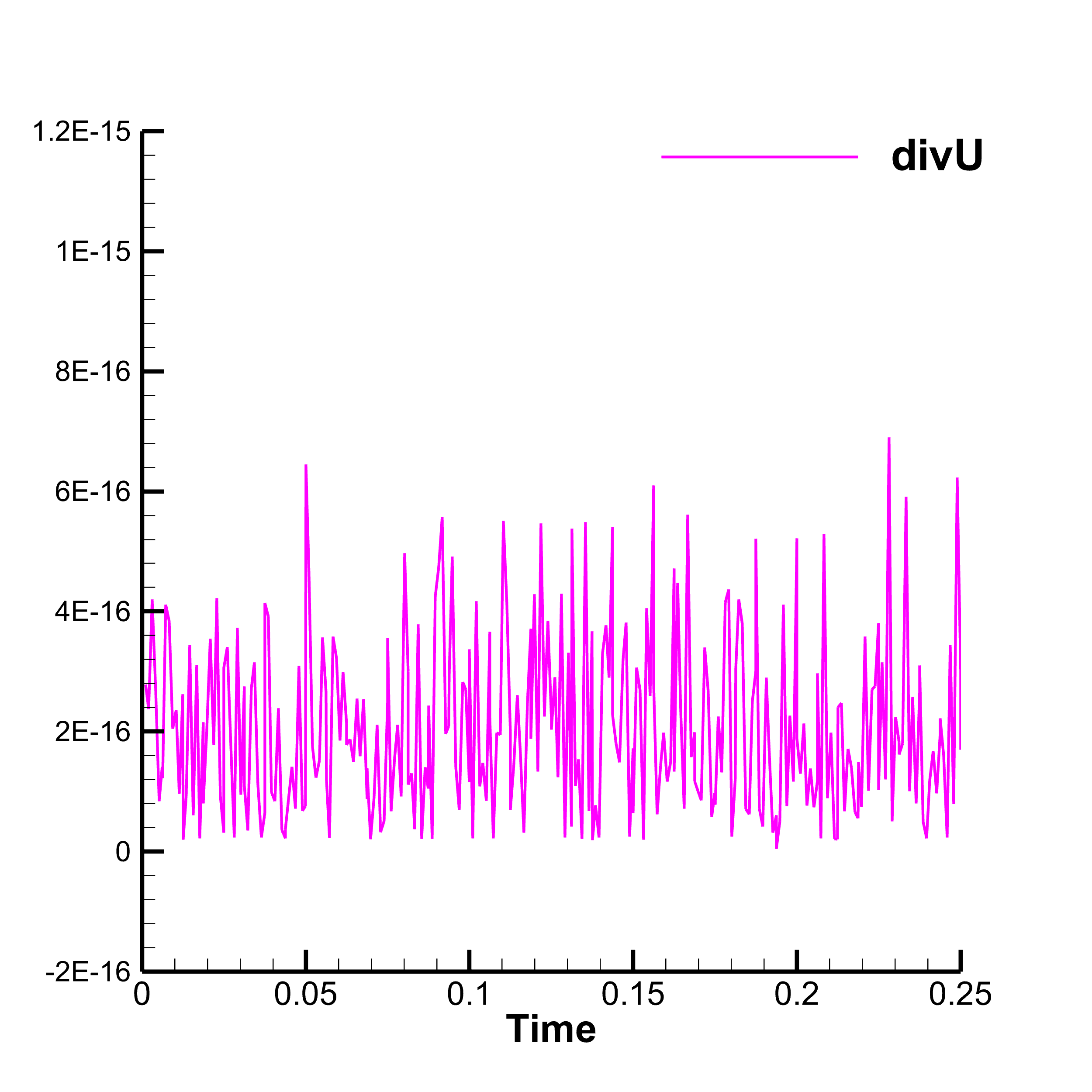}
	\end{minipage}
	
	\caption{Numerical solution of the Taylor-Green vortex at the final time $t_f = 0.25$ obtained using our structure-preserving semi-implicit scheme. 
		Pressure contours (top left), 
		1D cut along the line $y = \pi$ for the velocity component $u$ and along the line $x = \pi$ for $v$
		and comparison with the exact solution (top right). 
		Time evolution of the kinetic energy integrated over $\Omega$ (bottom left) and time evolution of the $L^2$ norm of the divergence of the velocity field (bottom right).}
	\label{fig:TaylorGreen incompressible Euler}
\end{figure}
\begin{table}[!tp]
	\centering
	\begin{tabular}{ccccccc}
		$Nx = Ny$ & $L^2(u)$ & $\mathcal{O}(u)$ & $L^2(v)$ & $\mathcal{O}(v)$ & $L^2(p)$ & $\mathcal{O}(p)$ \\
		\hline
		20 & 1.403E-1 & - & 1.401E-1 & - & 3.445E-1 & -\\
		40 & 7.248E-2 & 1.0 & 7.138E-2 & 1.0 & 1.595E-1 & 1.1\\
		60 & 4.803E-2 & 1.0 & 4.718E-2 & 1.0 & 7.449E-2 & 1.1\\
		80 & 3.629E-2 & 1.0 & 3.560E-2 & 1.0 & 5.837E-2 & 1.1\\
		100 & 2.894E-2 & 1.0 & 2.838E-2 & 1.0 & 4.784E-2 & 1.1\\
		\hline
	\end{tabular}	
	\caption{Numerical convergence results of our structure preserving semi-implicit scheme for the \textit{incompressible} Euler equations applied to the 2D Taylor-Green vortex problem. The $L^2$ error norms refer to the velocity components $u$ and $v$ and the pressure $p$ at the final time $t_f = 0.25$.}
	\label{convergence table euler}
\end{table}
In the case of \textit{compressible} Euler equations, the density is computed as $\rho(\mathbf{x}) = p(\mathbf{x})/c_0^2$, where $c_0$ is the sound speed in the fluid.
In Tables \ref{tab:structure preserving tab 1} and \ref{tab:structure preserving tab 2}, we verify the asymptotic preserving (AP) property running the simulation with increasing values of the isentropic sound speed, demonstrating that the divergence of the velocity field tends to zero with the square of the Mach number, as already shown in \cite{zampa2025asymptotic, boscheri2021structure}.
\begin{table}[!tp]
	\centering
	\begin{tabular}{c|ccc}
		&  $c_{0}$ = 10 & $c_{0} = 100$ & $c_{0} = 1000$ \\
		\hline 
		$\nabla \cdot \mathbf{u}$ & 3.8516E-6 & 1.1649E-8 & 1.2987E-10\\
	\end{tabular}
	\caption{Numerical convergence results for the \textit{compressible} Euler equations with $Nx = Ny = 100$ and $t_f = 0.25$. The table proves the asymptotic preserving property, showing that the divergence velocity goes to zero as the square of the Mach number.}
	\label{tab:structure preserving tab 1}
	\centering
	\begin{tabular}{c|ccc}
		&  $c_0$ = 10 & $c_0 = 100$ & $c_0 = 1000$ \\
		\hline 
		$\nabla \cdot \mathbf{u}$ & 6.8248E-7 & 1.4046E-9 & 1.9768E-11\\
	\end{tabular}
	\caption{Numerical convergence results for the \textit{compressible} Euler equations with $Nx = Ny = 200$ and $t_f = 0.25$. The table proves the asymptotic preserving property, showing that the divergence velocity goes to zero as the square of the Mach number.}
	\label{tab:structure preserving tab 2}
\end{table}

In the case of the incompressible Navier-Stokes equations, we decide to treat the viscosity term both explicitly and implicitly. 
As before, we run the problem on a triangular mesh with a number of $N_x = N_y$ points along each boundary edge of the periodic square domain. The simulation is run with $\nu=1$ and $\rho=1$ until $t_f = 0.25$ to verify that the discrete divergence of the velocity remains zero up to machine precision. 
We then run the same test again on a sequence of successively refined unstructured triangular meshes in order to verify the order of convergence, see Tables \ref{convergence table Navier-Stokes with explicit viscosity} and \ref{convergence table Navier-Stokes with implicit viscosity}.

\begin{figure}[!bp]
	\centering
	\begin{minipage}{0.31\textwidth}
		\centering
		\includegraphics[width=\textwidth]{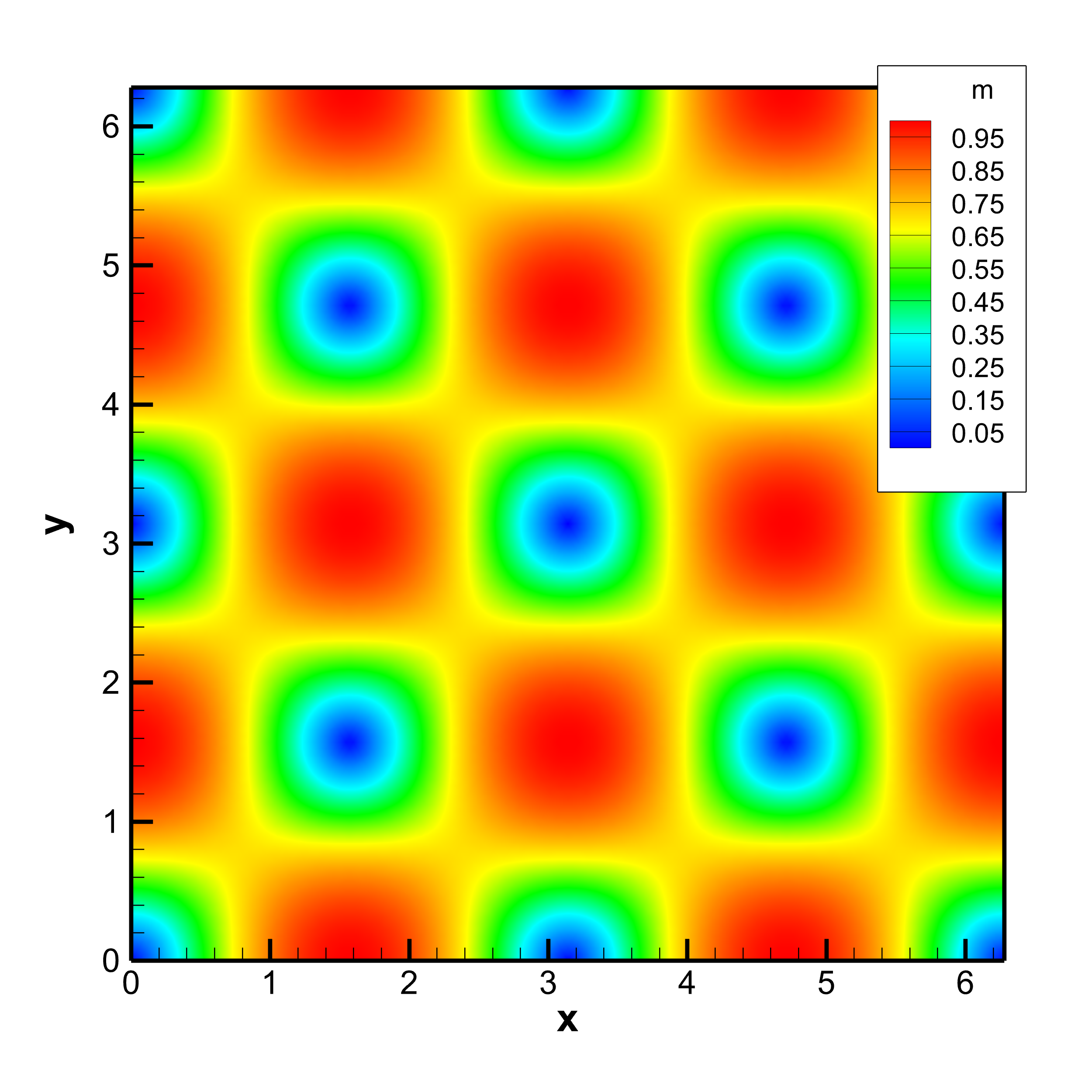}
	\end{minipage}
	\hfill
	\begin{minipage}{0.31\textwidth}
		\centering
		\includegraphics[width=\textwidth]{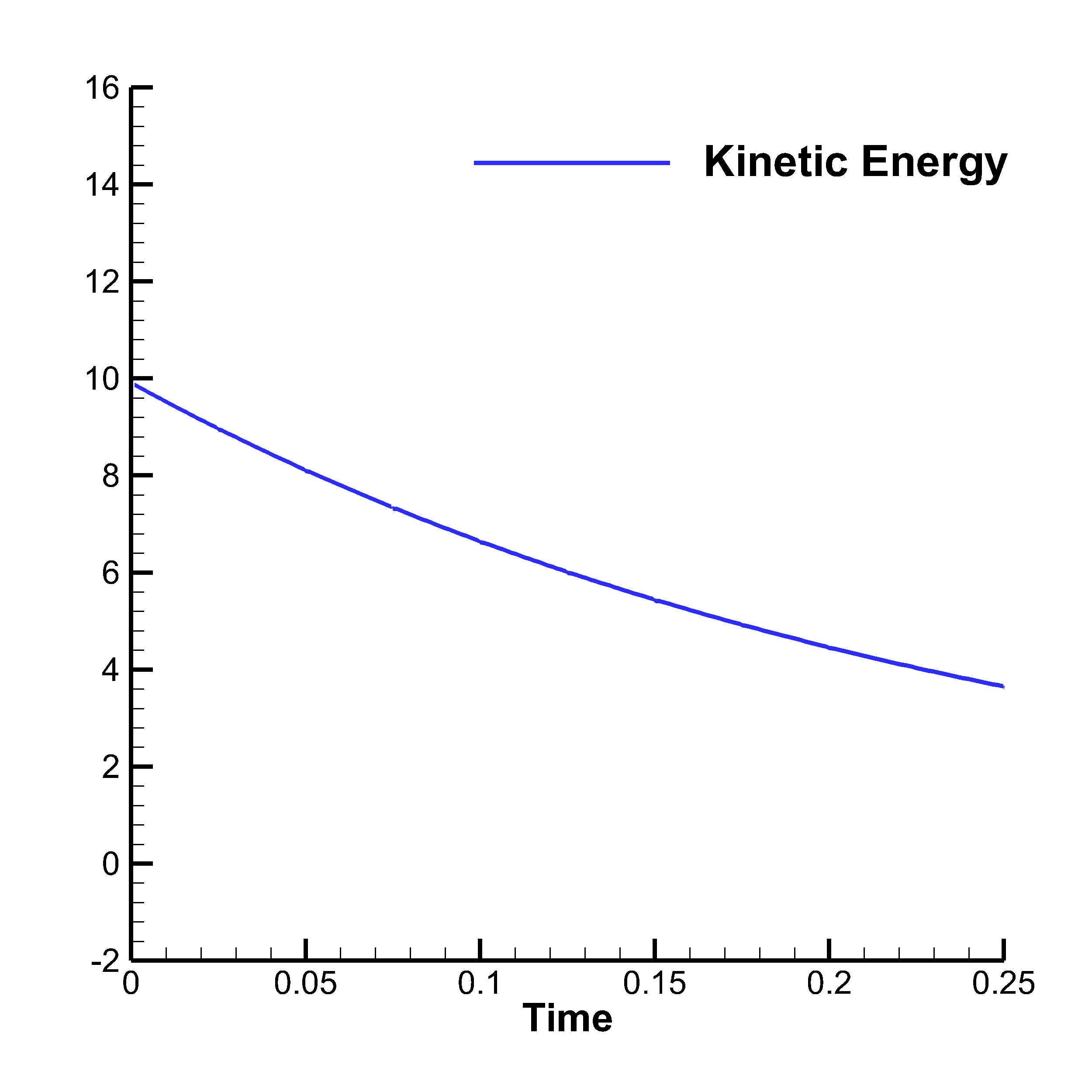}
	\end{minipage}
	\hfill
	\begin{minipage}{0.31\textwidth}
		\centering
		\includegraphics[width=\textwidth]{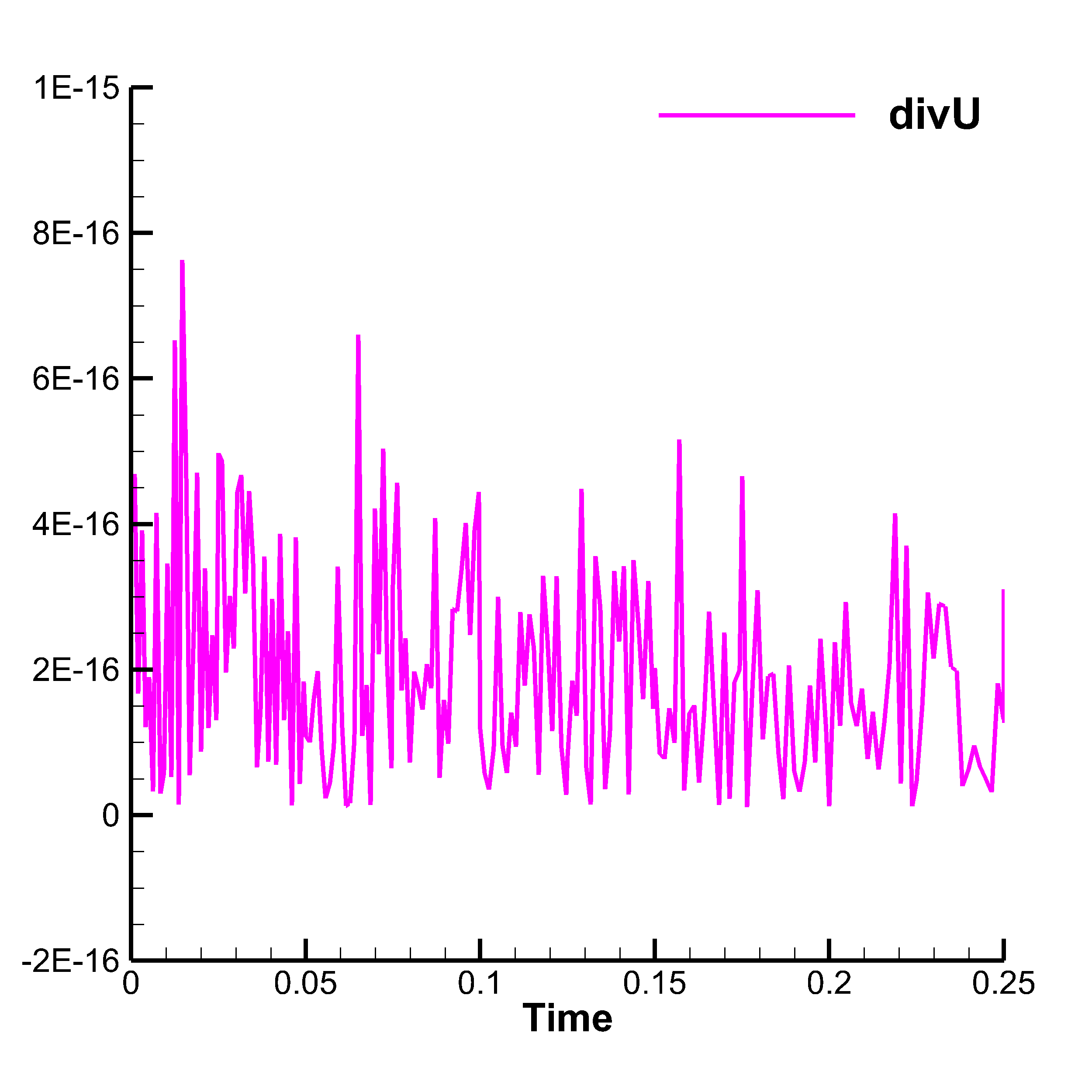}
	\end{minipage}
	
	\caption{Numerical solution of the Taylor-Green vortex at the final time $t_f = 0.25$ obtained using our structure-preserving semi-implicit scheme for the incompressible Navier-Stokes equations with implicit viscosity. 
	Total momentum (left), time evolution of the kinetic energy integrated over $\Omega$ (center) and time evolution of the $L^2$~norm of the divergence of the velocity field (right).}
	\label{fig:TaylorGreen incompressible Navier-Stokes}
\end{figure}
\begin{table}[!tp]
	\centering
	\begin{tabular}{ccccccc}
		$Nx = Ny$ & $L^2(u)$ & $\mathcal{O}(u)$ & $L^2(v)$ & $\mathcal{O}(v)$ & $L^2(p)$ & $\mathcal{O}(p)$ \\
		\hline
		20 & 1.282E-01 & - & 1.287E-01 & - & 3.207E-01 & -\\
		40 & 6.660E-02 & 1.0 & 6.597E-02 & 1.0 & 1.472E-01 & 1.3\\
		60 & 4.416E-02 & 1.0 & 4.364E-02 & 1.0 & 9.344E-02 & 1.2\\
		80 & 3.340E-02 & 1.0 & 3.297E-02 & 1.0 & 6.788E-02 & 1.2\\
		100 & 2.664E-02 & 1.0 & 2.629E-02 & 1.0 & 5.324E-02 & 1.1\\
		\hline
	\end{tabular}
	
	\caption{Numerical convergence results of our structure preserving semi-implicit scheme for the \textit{incompressible} Navier-Stokes equations with \textit{explicit} viscosity applied to the 2D Taylor-Green vortex problem. The $L^2$ error norms refer to the velocity components $u$ and $v$ and the pressure $p$ at the final time $t_f = 0.25$.}
	\label{convergence table Navier-Stokes with explicit viscosity}
\end{table}
\begin{table}[!tp]
	\centering
	\begin{tabular}{ccccccc}
		$Nx = Ny$ & $L^2(u)$ & $\mathcal{O}(u)$ & $L^2(v)$ & $\mathcal{O}(v)$ & $L^2(p)$ & $\mathcal{O}(p)$ \\
		\hline
		20 & 1.278E-01 & - & 1.284E-01 & - & 3.220E-01 & -\\
		40 & 6.637E-02 & 1.0 & 6.579E-02 & 1.0 & 1.514E-01 & 1.2\\
		60 & 4.403E-02 & 1.0 & 4.354E-02 & 1.0 & 9.857E-02 & 1.1\\
		80 & 3.331E-02 & 1.0 & 3.290E-02 & 1.0 & 7.223E-02 & 1.1\\
		100 & 2.658E-02 & 1.0 & 2.623E-02 & 1.0 & 5.698E-02 & 1.0\\
		\hline
	\end{tabular}
	
	\caption{Numerical convergence results of our structure preserving semi-implicit scheme for the \textit{incompressible} Navier-Stokes equations with \textit{implicit} viscosity applied to the 2D Taylor-Green vortex problem. The $L^2$ error norms refer to the velocity components $u$ and $v$ and the pressure $p$ at the final time $t_f = 0.25$.}
	\label{convergence table Navier-Stokes with implicit viscosity}
\end{table}

%

\subsection{Riemann Problem}
For the Sod-type shock tube problem considered here, the computational domain is $\Omega = [-1, 1]^2$. The initial condition reads
\begin{equation}
	(\rho, \mathbf{u}, p) =
	\begin{cases}
		(1, 0, 0, c_0^2 \rho) & \text{if } x \leq 0, \\[6pt]
		(0.125, 0, 0, c_0^2 \rho) & \text{if } x > 0,
	\end{cases}
\end{equation}
where $c_0$ is the isothermal sound speed in the fluid. The solution of the shock tube problem is characterized by a left-moving rarefaction wave and a right-moving shock wave, connecting the initial high-density/high-pressure state on the left with the low-density/low-pressure state on the right.

\begin{figure}[!bp]
\begin{minipage}{0.48\textwidth}
		\centering
		\includegraphics[width=\textwidth]{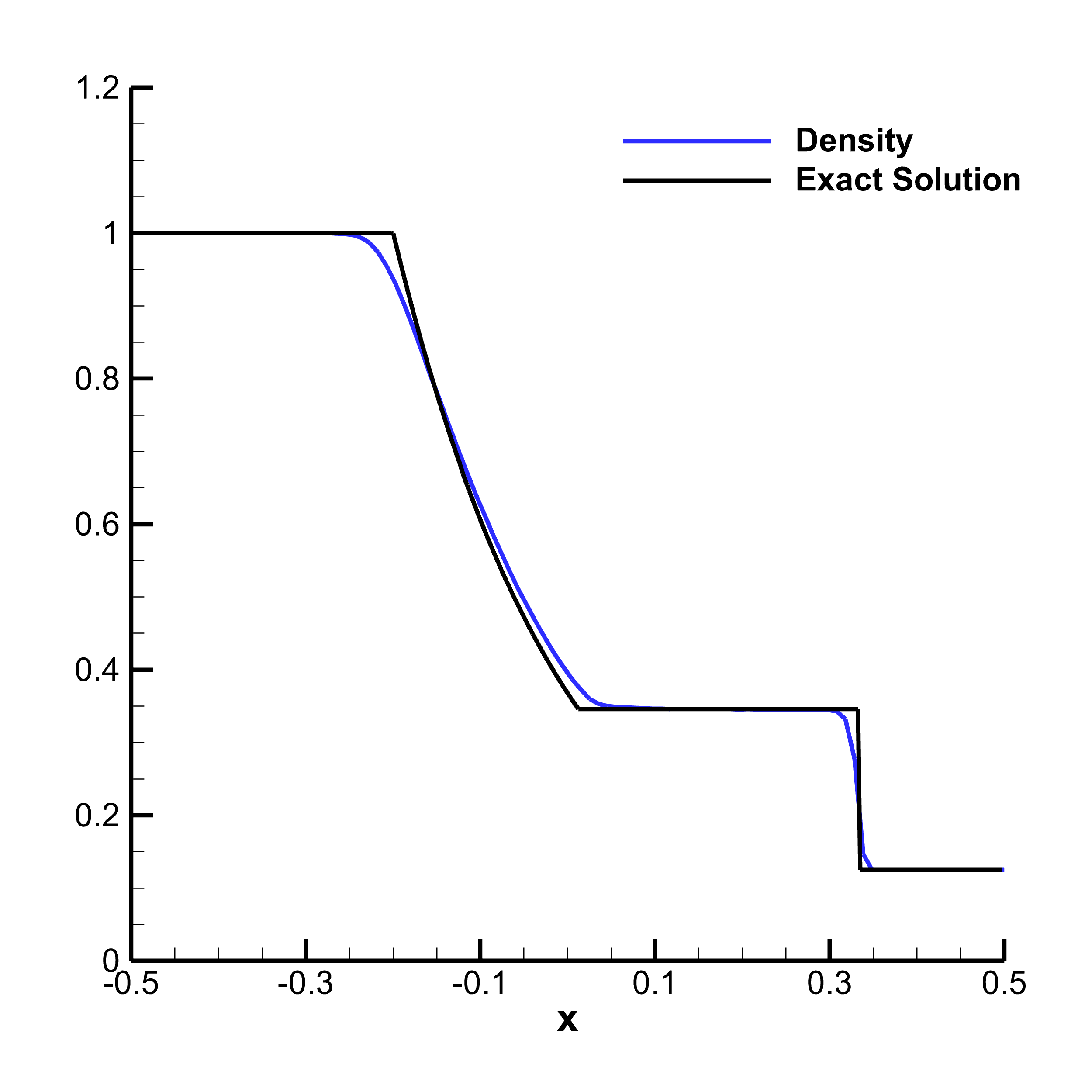}
\end{minipage}
\hfill
\begin{minipage}{0.48\textwidth}
		\centering
		\includegraphics[width=\textwidth]{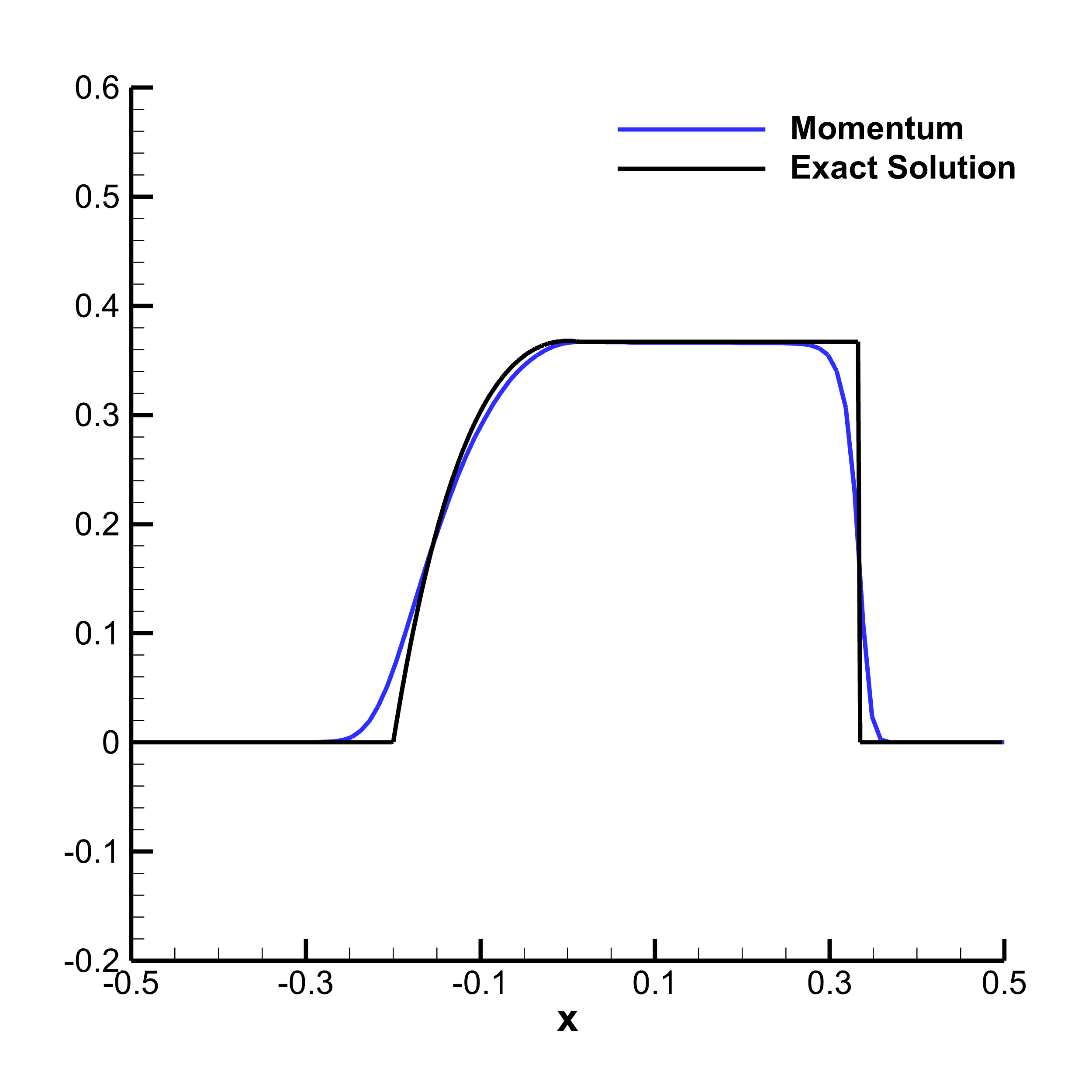}
\end{minipage}

	\caption{Numerical solution of the Sod shock tube problem obtained using our structure-preserving semi-implicit scheme for compressible Euler equations.
	1D cut along the line $y = 0$ for the density (left) and for the momentum (right) at $t_f = 0.25$.}
	
	\label{Sod test case}
\end{figure}
\subsection{Gresho Vortex}
The Gresho problem is a stationary incompressible rotating vortex around the origin in two dimensions. The periodic computational domain is the square $\Omega = [-0.5, 0.5]^2$. The initial condition for pressure and velocity are:
\begin{equation}
	\mathbf{u(\mathbf{x})} = \mathbf{e}_{\phi} \cdot
	\begin{cases}
		5r(\mathbf{x}) & \text{if } x < 0.2, \\[6pt]
		2-5r(\mathbf{x}) &\text{if } 0.2 \le x < 0.4, \\[6pt]
		0    &\text{if } x \ge 0.4, \\[6pt]
	\end{cases}
\end{equation} 
\begin{equation}
	p(\mathbf{x}) = 
	\begin{cases}
		p_0 + \frac{25}{2}r(\mathbf{x})^2 &\text{if } x < 0.2, \\[6pt]
		p_0 + \frac{25}{2}r(\mathbf{x})^2 + 4(1-5r(\mathbf{x})) + 4 \log(5r(\mathbf{x})) & \text{if } 0.2 \le x < 0.4, \\[6pt]
		p_0 - 2 + 4\log(2) & \text{if } x \ge 0.4, \\[6pt]
	\end{cases}
\end{equation}
where $\mathbf{e}_{\phi}$ is the azimuthal unit vector in two-dimensional polar coordinates, $r(\mathbf{x}) = \sqrt{x^2 + y^2}$. The pressure in the vortex center is $p_0 = c_0^2\rho$ with constant density $\rho = 1$.
\begin{figure}[!bp]
	\centering
	\begin{minipage}{0.31\textwidth}
		\centering
		\includegraphics[width=\textwidth]{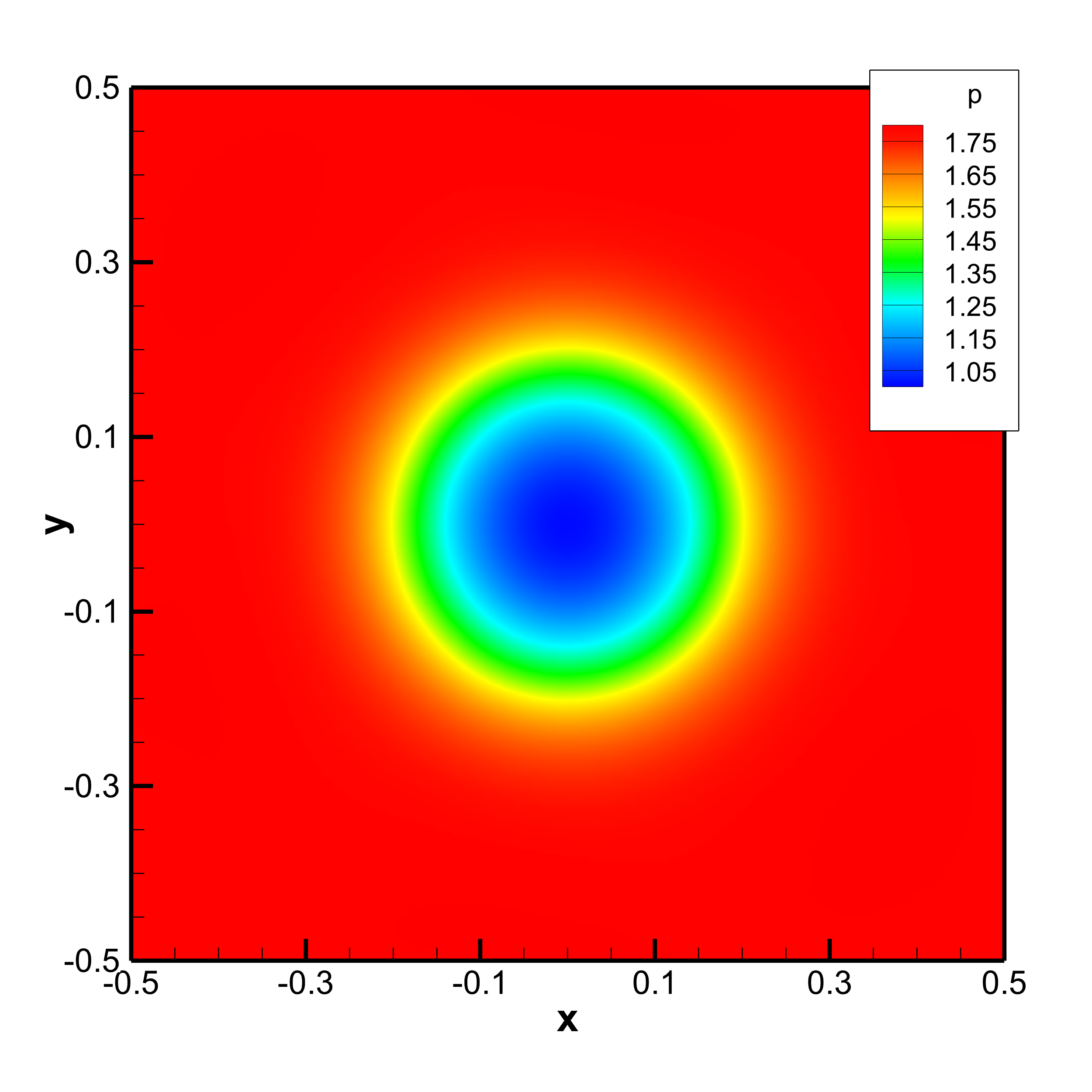}
	\end{minipage}
	\hfill
	\begin{minipage}{0.31\textwidth}
		\centering
		\includegraphics[width=\textwidth]{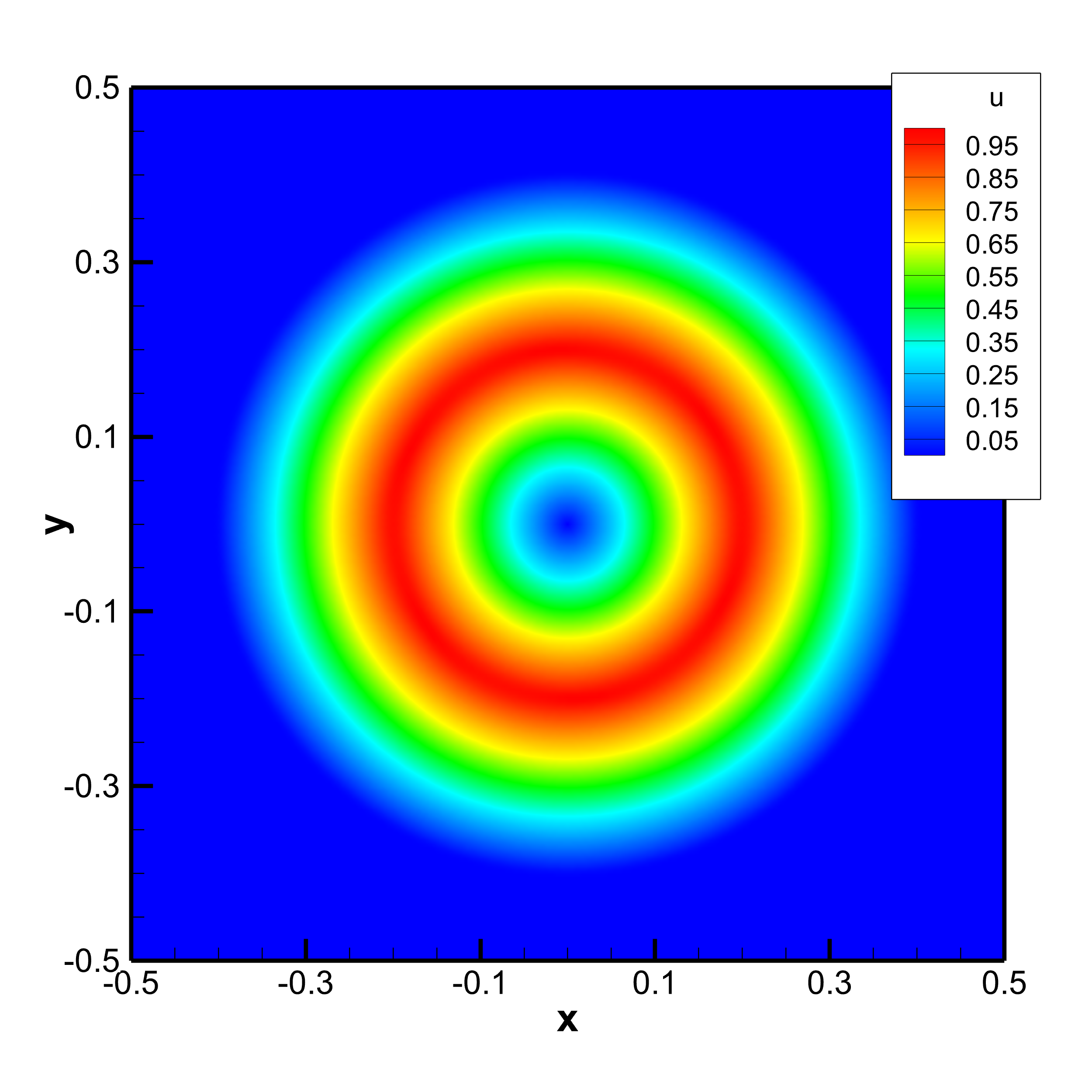}
	\end{minipage}
	\hfill
	\begin{minipage}{0.31\textwidth}
		\centering
		\includegraphics[width=\textwidth]{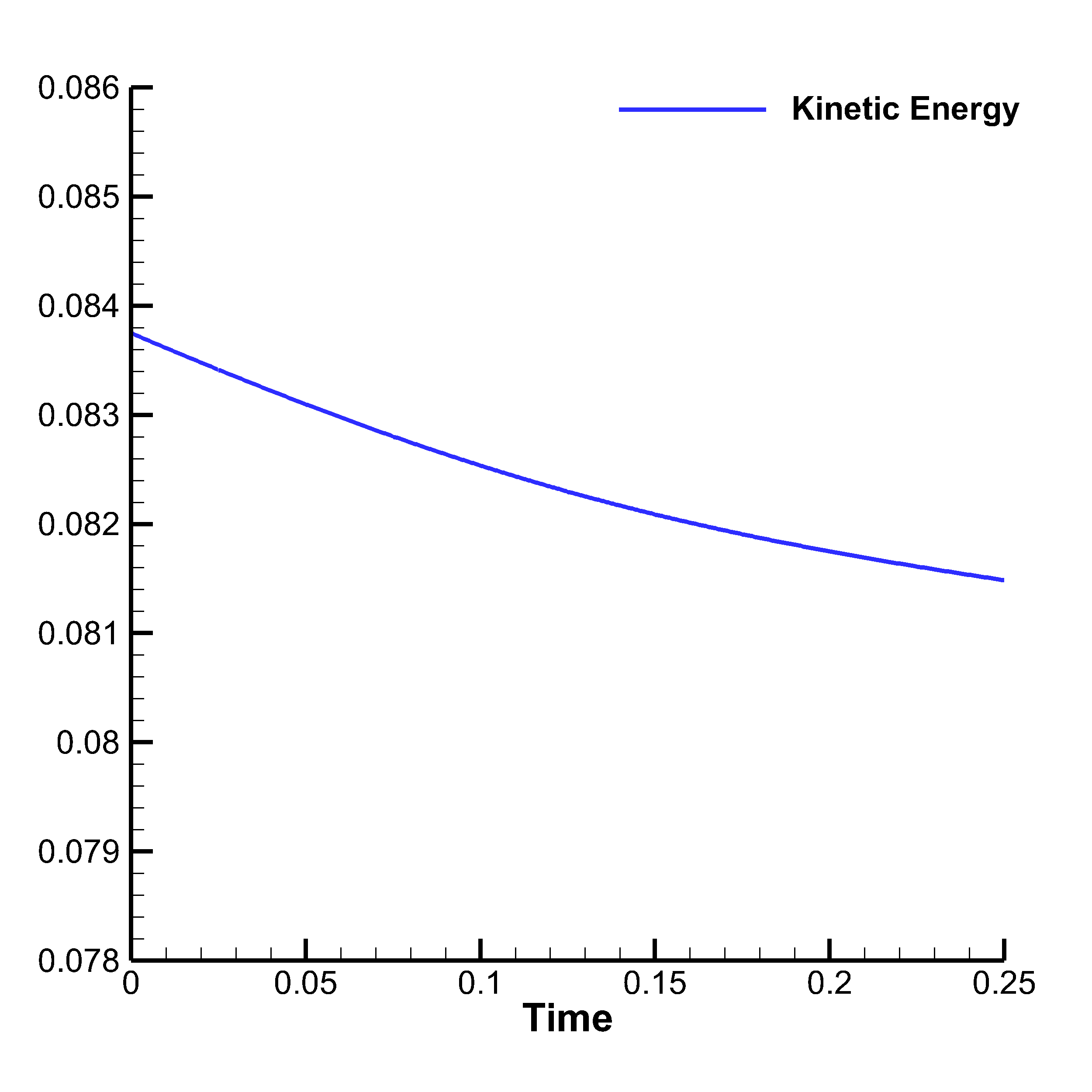}
	\end{minipage}

	\caption{Numerical solution of the Gresho vortex at the final time $t_f = 0.25$ obtained using our structure-preserving semi-implicit scheme for the compressible Euler equations.
		Pressure contours (left), velocity magnitude (center) and time evolution of the kinetic energy integrated over $\Omega$ (right).}
	\label{fig:Gresho vortex}
\end{figure}

\subsection{Incompressible MHD vortex}
The test case of the MHD vortex is used to solve the incompressible MHD system. The computational domain is the periodic square $\Omega = [-5, 5]^2$ and we choose CFL = 0.9. The pressure is set to $p(\mathbf{x}) = 1 + \frac{1}{2}e -\frac{1}{2}r(\mathbf{x})e^{1-r^2(\mathbf{x})}$  where $ r(\mathbf{x}) = \sqrt{x^2 + y^2}$ and the density is $\rho(\mathbf{x}) = 1$. The velocity field is defined as $\mathbf{u}(\mathbf{x}) = e^{\frac{1}{2}(1-r(\mathbf{x})^2)}[- y, x, 0]^T$ and the vector potential is set to $\mathbf{A}(\mathbf{x}) = e^{\frac{1}{2}(1 - x^2 - y^2)}[0, 0, -1]^T$. The magnetic field is obtained as $\mathbf{B}(\mathbf{x}) = \nabla \times \mathbf{A}(\mathbf{x})$ and the electric field is computed as $\mathbf{E}(\mathbf{x}) = - \mathbf{u}(\mathbf{x}) \times \mathbf{B}(\mathbf{x})$. 
As before, we run the problem on a triangular mesh with a number of $N_x = N_y$ points along each boundary edge of the square domain. The simulation is run until a final time of $t_f = 0.25$ to verify that the discrete divergence of the velocity and magnetic field remain zero up to machine precision. 
We then run the same test again but on a sequence of successively refined unstructured triangular meshes to verify the order of convergence.

\begin{figure}[!tp]
	\centering
	\begin{minipage}{0.48\textwidth}
		\centering
		\includegraphics[width=\textwidth]{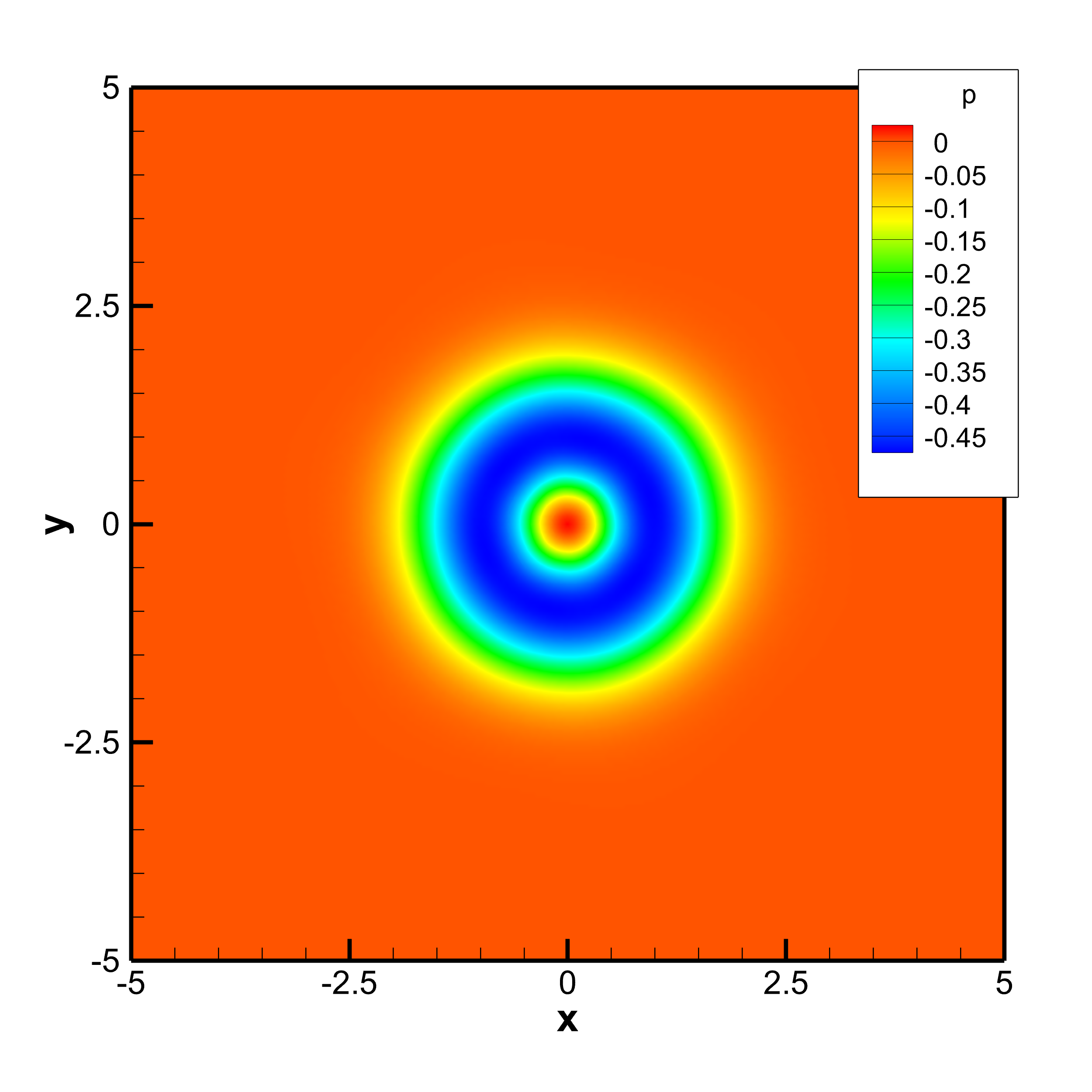}
	\end{minipage}
	\hfill
	\begin{minipage}{0.48\textwidth}
		\centering
		\includegraphics[width=\textwidth]{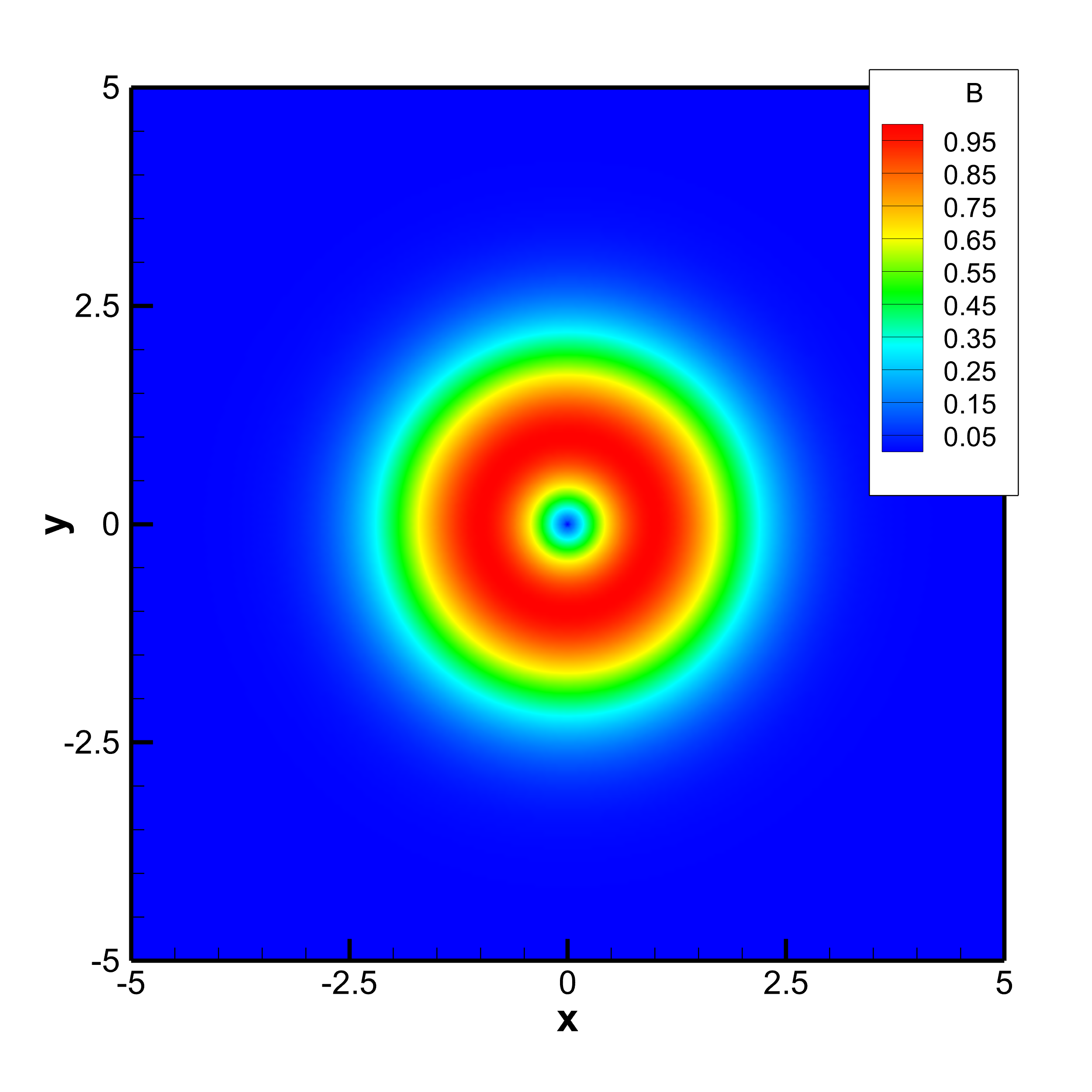}
	\end{minipage}
	
	\begin{minipage}{0.48\textwidth}
		\centering
		\includegraphics[width=\textwidth]{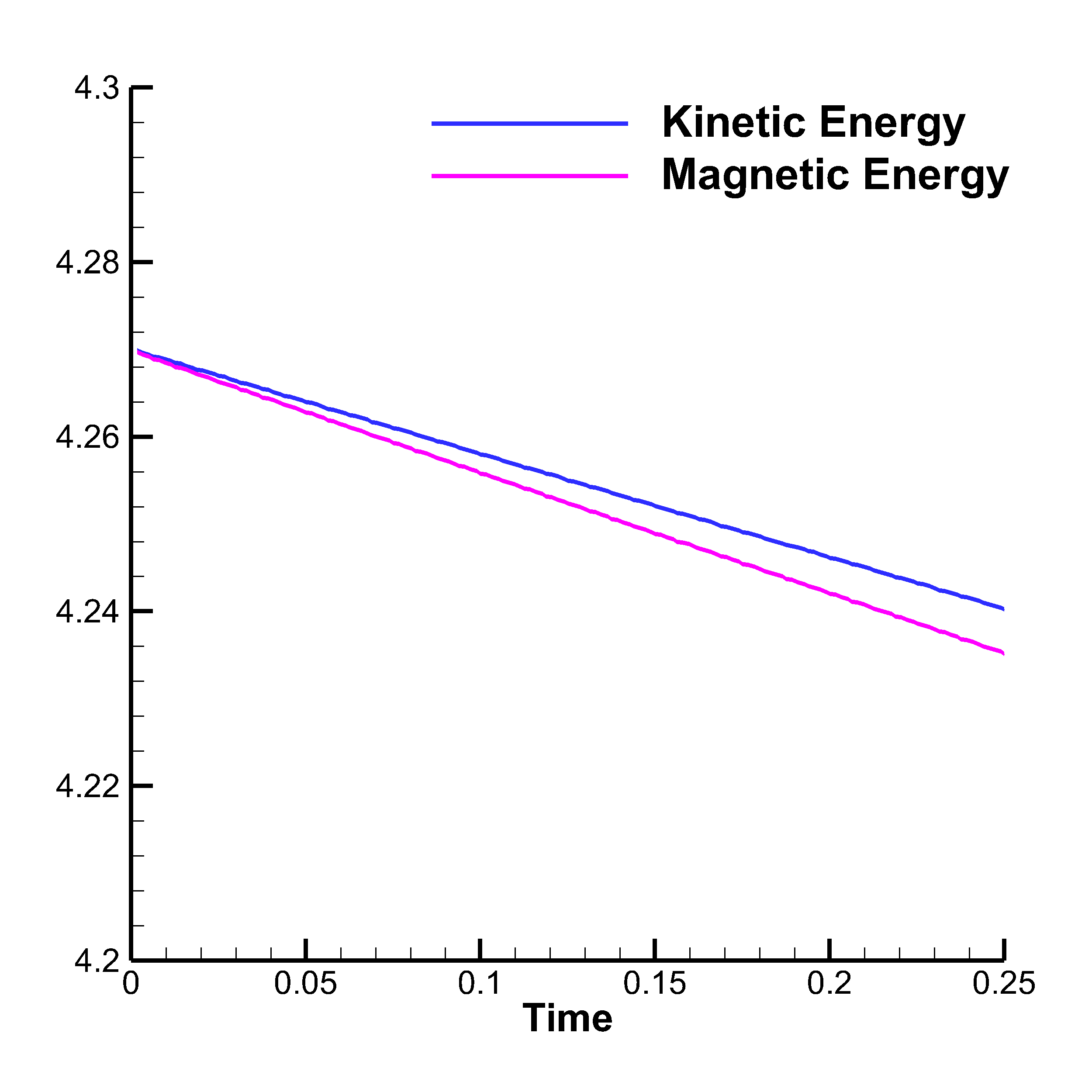}
	\end{minipage}
	\hfill
	\begin{minipage}{0.48\textwidth}
		\centering
		\includegraphics[width=\textwidth]{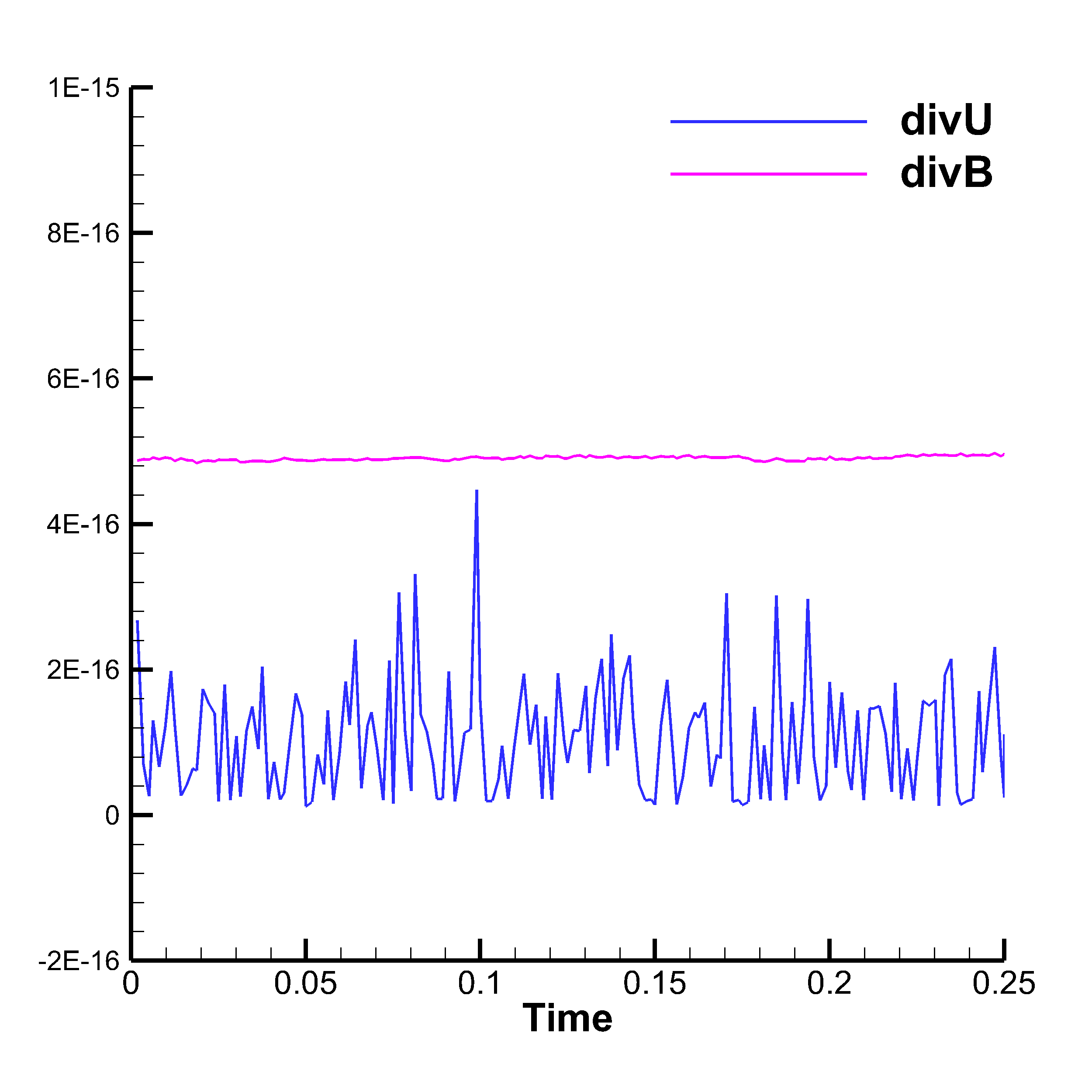}
	\end{minipage}
	
	\caption{Numerical solution of the MHD vortex ad a final time $t_f = 0.25$ using the structure preserving semi-implicit scheme. Pressure contours (top left) and magnetic field contours (top right). Time evolution of the kinetic and magnetic energy integrated over $\Omega$ (bottom left). Time evolution of the $L^2$ error norm of the divergence of the velocity and of the divergence of the magnetic field (bottom right).}
	\label{fig:CentralMHDVortec MHD system}
\end{figure}

\begin{table}[!tp]
	\centering
	\resizebox{\textwidth}{!}{%
		\begin{tabular}{ccccccccccc}
			$Nx = Ny$ & $L^2(u)$ & $\mathcal{O}(u)$ & $L^2(v)$ & $\mathcal{O}(v)$ & $L^2(p)$ & $\mathcal{O}(p)$ & $L^2(Bx)$ & $\mathcal{O}(Bx)$ & $L^2(By)$ & $\mathcal{O}(By)$ \\
			\hline
			20 & 2.388E-1 & - & 2.420E-1 & - & 2.483E-1 & - & 2.453E-1 & - & 2.954E-1 & - \\
			40 & 1.283E-1 & 0.9 & 1.270E-1 & 1.0 & 1.384E-1 & 0.9 & 1.377E-1 & 0.9 & 1.513E-1 & 1.1 \\
			60 & 8.564E-2 & 1.0 & 8.575E-2 & 1.0 & 9.968E-2 & 0.8 & 9.946E-2 & 0.8 & 1.051E-1 & 0.9\\
			80 & 6.565E-2 & 1.0 & 6.548E-2 & 1.0 & 7.994E-2 & 0.8 & 7.995E-2 & 0.8 & 8.134E-2 & 0.9 \\
			100 & 5.226E-2 & 1.0 & 5.240E-2 & 1.0 & 6.477E-2 & 0.9 & 6.477E-2 & 0.9 & 6.445E-2 & 1.1 \\
			\hline
		\end{tabular}
	}
	\caption{Numerical convergence results of our structure preserving semi-implicit scheme for the \textit{incompressible} MHD systems applied to a vortex problem. The $L^2$~error norms refer to the velocity components $u$ and $v$ and the pressure $p$ at $t_f = 0.25$.}
\end{table}


\subsection{Solid Rotor}
In the solid rotor test case, see e.g. \cite{boscheri2021structure}, the computational domain is the periodic square $\Omega = [-1, 1]^2$, the final simulation time is set to $t_f = 0.25$ and we choose CFL = 0.9. With exception made for the velocity field, all variables are initially constant throughout the domain. Specifically we set $\rho(\mathbf{x}) = 1$, $p(\mathbf{x}) = 1$, $\mathbf{A}(\mathbf{x}) = \mathbf{I}$, while the velocity field is $\mathbf{u}(\mathbf{x}) = [-y/R, x/R, 0]$ with $r(\mathbf{x}) = \sqrt{x^2 + y^2} \le R$, and $\mathbf{u}(\mathbf{x}) = 0$ otherwise, that is, outside of the circle of radius $R = 0.2$. The characteristic speed of shear waves is $c_s = 0.25$. 

We run the problem on a triangular mesh with a number of $N_x = N_y$ points along each boundary edge of the square domain. The simulation is run until $t_f = 0.25$ to verify that the discrete divergence of the velocity field and the curl of the distortion field remain zero up to machine precision. 

\begin{figure}[!tp]
	\centering
	\begin{minipage}{0.48\textwidth}
		\centering
		\includegraphics[width=\textwidth]{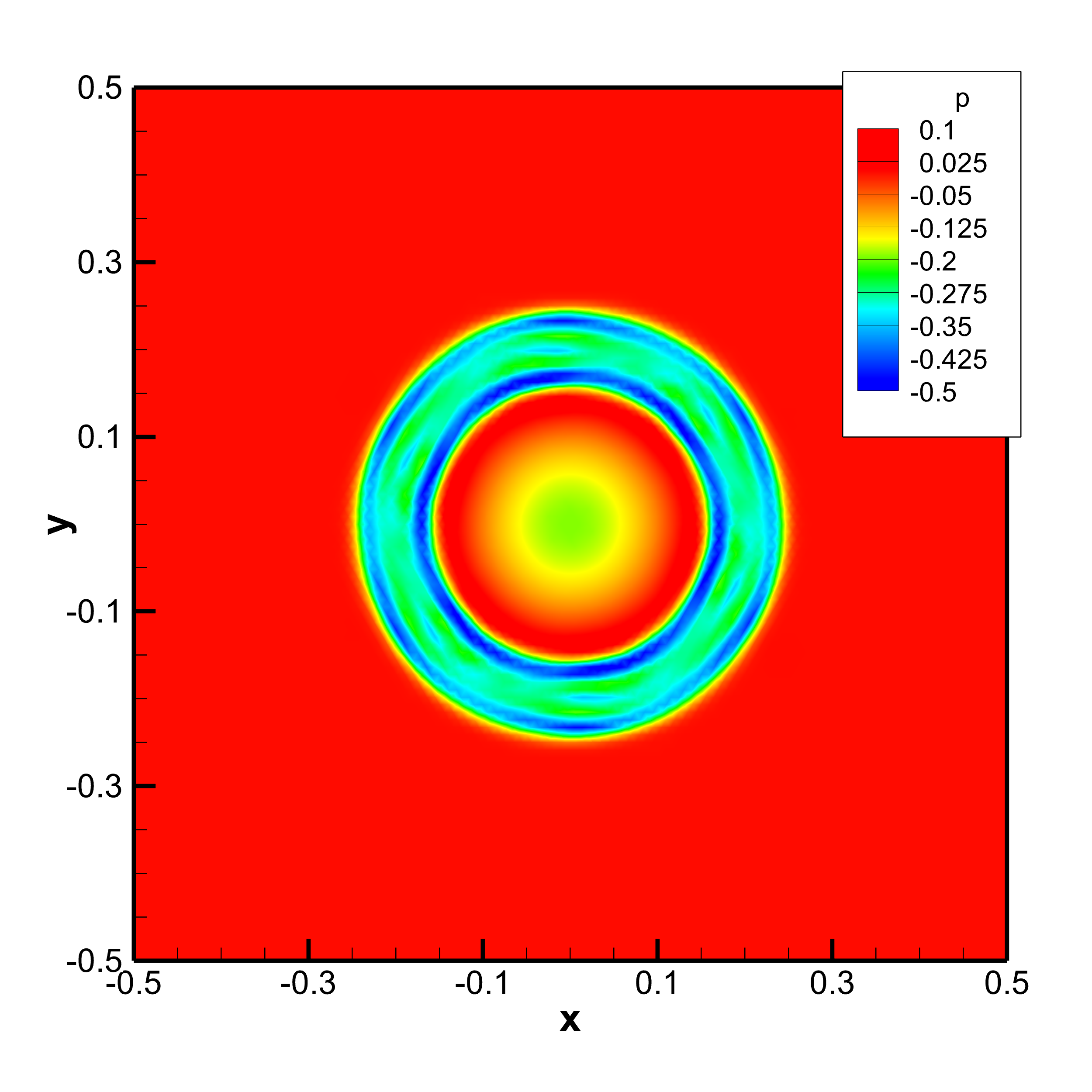}
	\end{minipage}
	\hfill
	\begin{minipage}{0.48\textwidth}
		\centering
		\includegraphics[width=\textwidth]{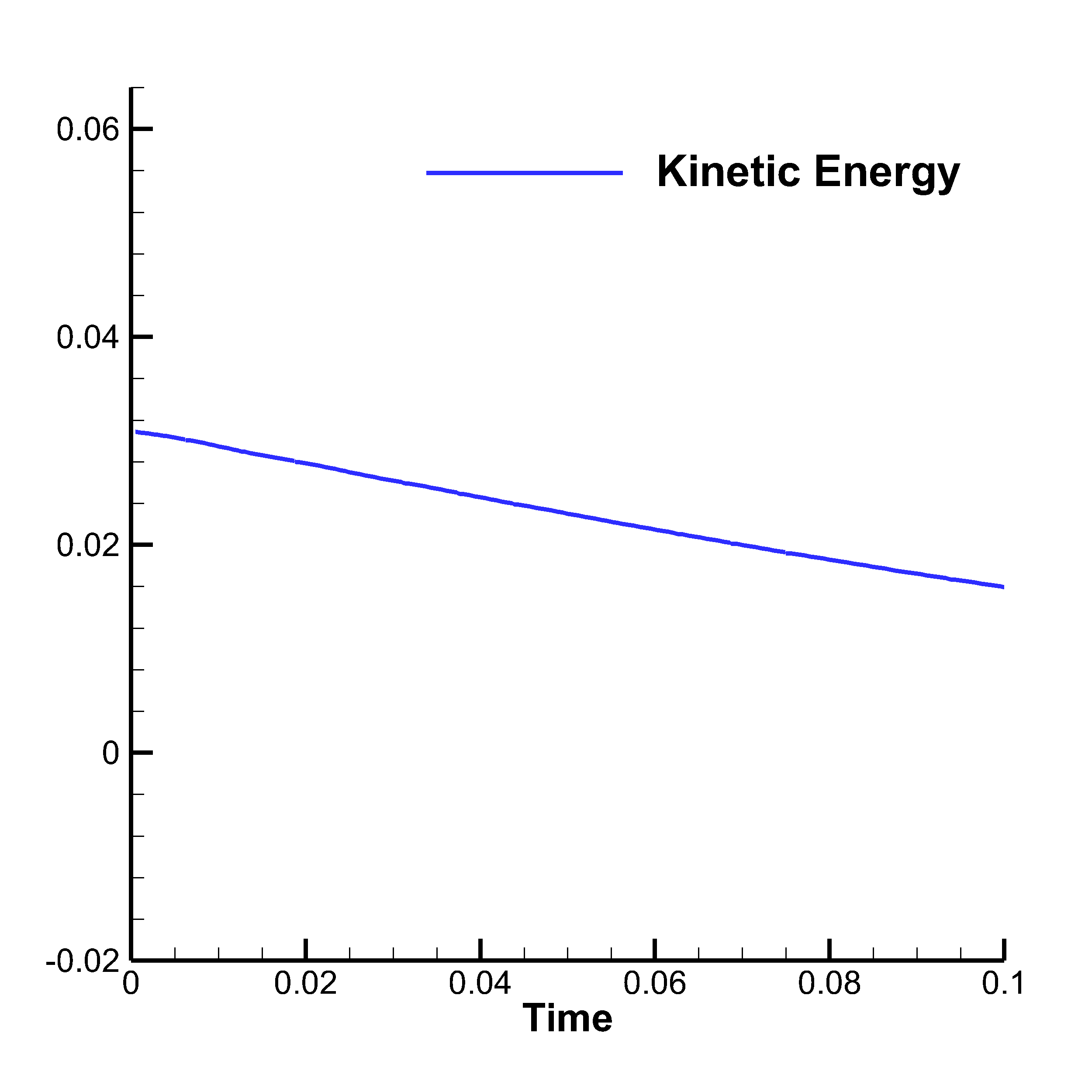}
	\end{minipage}
	%
	\begin{minipage}{0.48\textwidth}
		\centering
		\includegraphics[width=\textwidth]{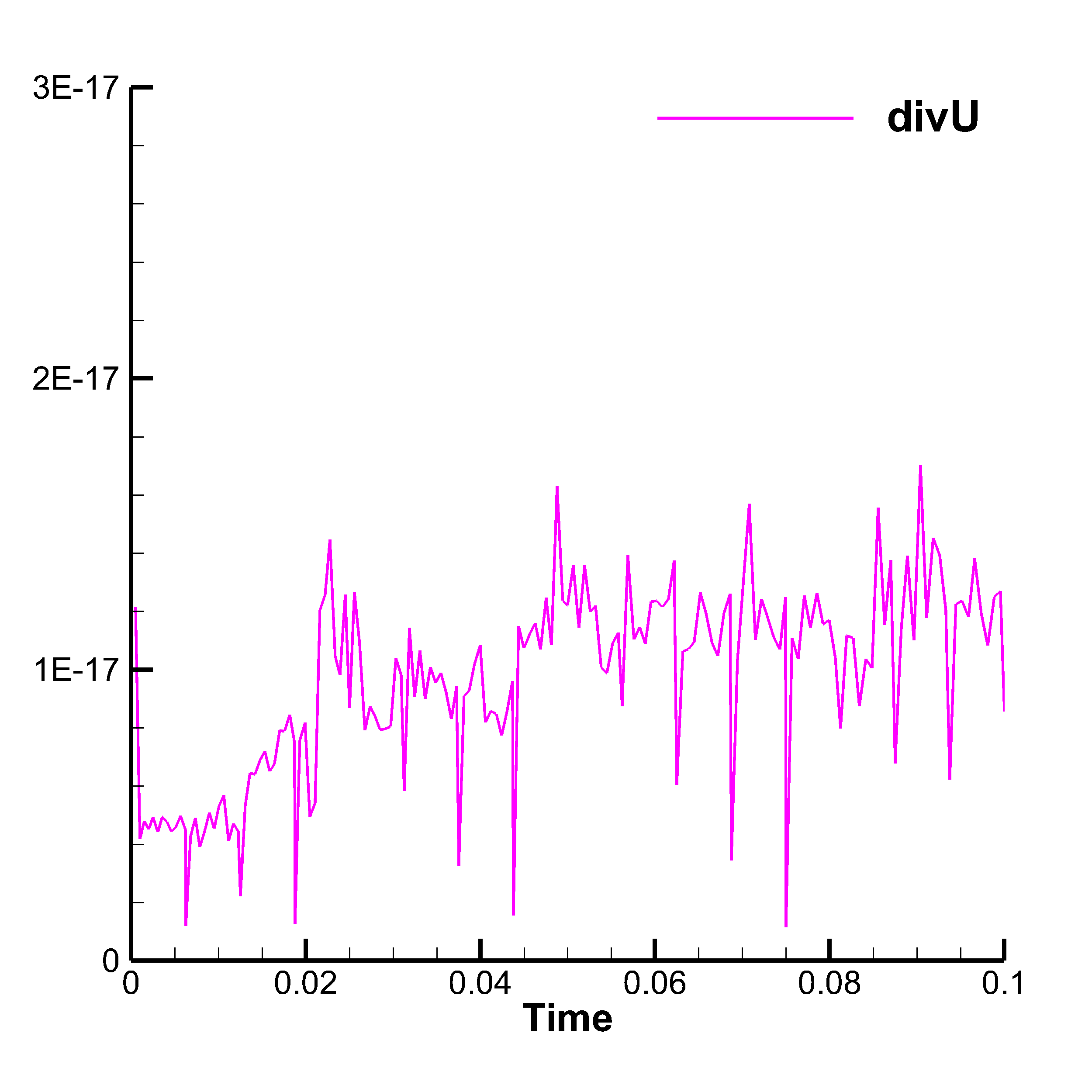}
	\end{minipage}
	\hfill
	\begin{minipage}{0.48\textwidth}
		\centering
		\includegraphics[width=\textwidth]{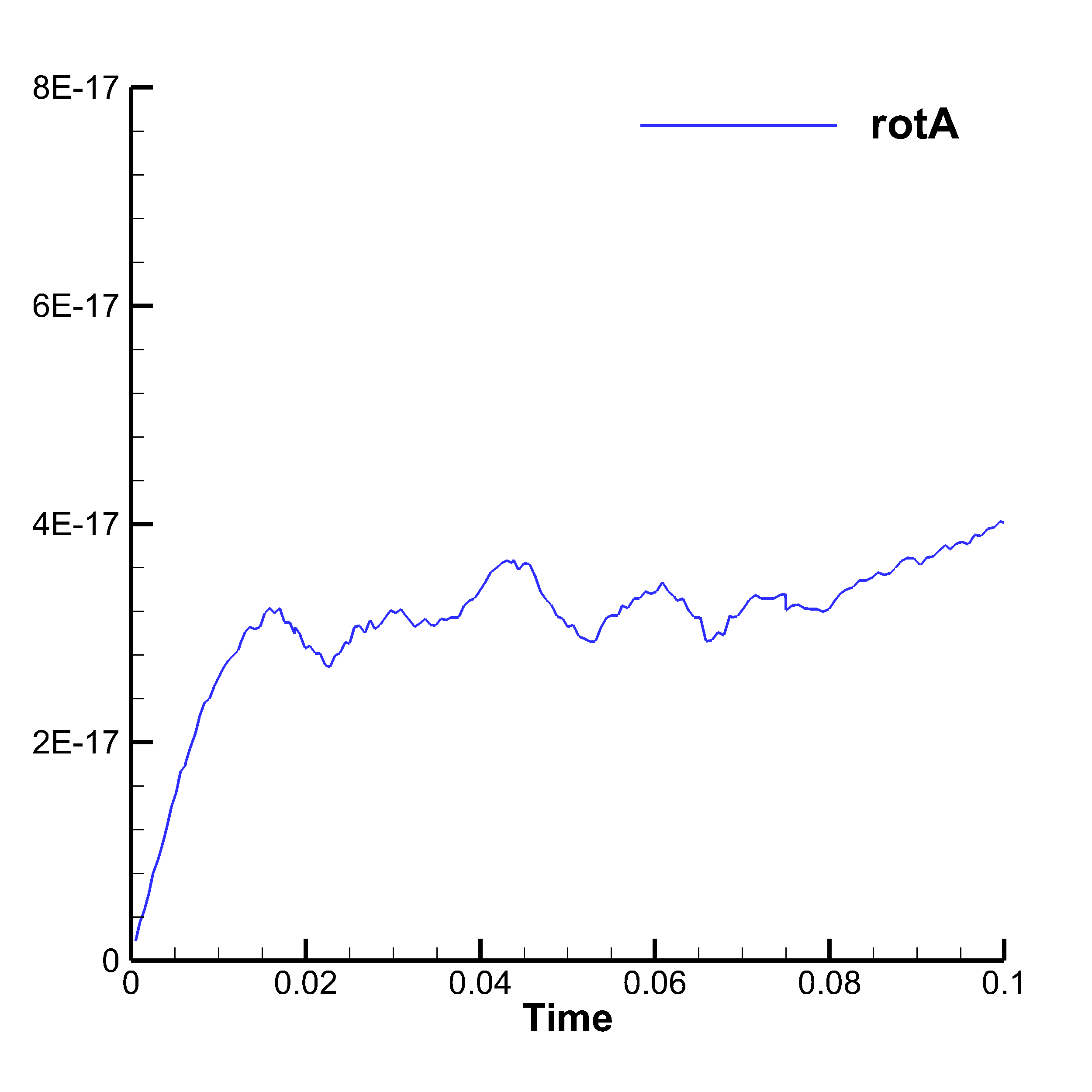}
	\end{minipage}
	\caption{Numerical solution of the Solid rotor problem at the final time $t_f = 0.25$ obtained using our structure-preserving semi-implicit scheme. 
	Pressure contours (top left), 
	time evolution of the kinetic energy integrated over $\Omega$ (top right). 
	Time evolution of the $L^2$ norm of the divergence of the velocity field and (bottom left) and curl of the distortion field (bottom right).}
	\label{fig:Solid rotor for GPR}
\end{figure}

\section{Conclusions}
\label{conclusion}

In this paper, we have designed a new compatible, structure preserving semi-implicit finite volume method on vertex-staggered unstructured meshes for the solution of a broad spectrum of nonlinear systems of time dependent partial differential equations. 
The semi-implicit nature of the scheme, which discretizes explicitly all nonlinear convective terms and implicitly only the pressure term, allows to generate an asymptotic-preserving method that is consistent with the low Mach number limit of the equations. The resulting linear algebraic system for the pressure is symmetric and positive definite. 
Moreover, since the time step restriction regards only the convective terms, we can ensure robust performance across all Mach number regimes. Thanks to the use of compatible discrete differential nabla operators on the primary and dual mesh, the method is exactly divergence-free and curl-free. 

Since our method is a first-order scheme, future work will focus on its extension to higher-order of accuracy following the ideas outlined in \cite{abgrall2025simple}. In particular, we plan to investigate suitable reconstructions and time-integration strategies that preserve the same compatibility, stability and asymptotic properties. 
We will also investigate the extension of the proposed approach to the full compressible Euler equations along the ideas outlined in \cite{park2005multiple,dumbser2016conservative,boscheri2021structure,TavelliDumbser2017}.
Finally, we plan to exploit some of the knowledge acquired in this work to develop novel multidimensional Riemann solvers, as those of~\cite{gallice2022entropy,GaburroMultiDRS} with structure preserving capabilities to be inserted in high order explicit ALE schemes on moving polygonal meshes as~\cite{gaburro2020high,gaburro2021high,gaburro2021unified,gaburro2025high}. 

\section*{Acknowledgments}
All authors are members of the INdAM GNCS group in Italy. 
E.~Gaburro and E.~Bernardelli gratefully acknowledge the support received from the European Union 
with the ERC Starting Grant \textit{ALcHyMiA} (grant agreement No. 101114995).
Views and opinions expressed are however those of the authors only and do not necessarily 
reflect those of the European Union or the European Research Council Executive Agency. 
Neither the European Union nor the granting authority can be held responsible for them.

M.~Dumbser was financially supported by the Italian Ministry of University
and Research (MUR) in the framework of the PRIN 2022 project \textit{High order structure-preserving semi-implicit schemes for hyperbolic equations} and via the  Departments of Excellence  Initiative 2018--2027 attributed to DICAM of the University of Trento (grant L. 232/2016).
M.~Dumbser was also funded by the Fondazione Caritro via the project SOPHOS.

%
%
\bibliographystyle{unsrt}      
\bibliography{referencesFuCHSiA-1}  

\end{document}